\documentclass[12pt]{article}
\usepackage{amsmath, amssymb, verbatim, amsthm, graphicx, titlesec}
\usepackage{hyperref}
\usepackage{multicol}

\usepackage[dvipsnames]{xcolor}

\newtheorem{theorem}{Theorem}

\newtheorem{lemma}[theorem]{Lemma}
\newtheorem{corollary}{Corollary}
\numberwithin{equation}{section}

\newcommand{\Z}{\mathbb{Z}}

\newcommand{\A}{\mathcal{A}}
\newcommand{\AP}[1]{\mathcal{A}_{\left\{ 001,210 \right\}}(#1)}
\newcommand{\APK}[2]{\mathcal{A}_{\left\{ 001,210 \right\}}(#1, #2)}

\textwidth 6.5in
\textheight 9in
\oddsidemargin 0in
\evensidemargin 0in
\headsep -0.5in

\setlength{\parskip}{0.25cm}

\pagenumbering{arabic}

\title{Two Combinatorial Interpretations of Rascal Numbers}
\author{Amelia Gibbs \and Brian K. Miceli}
\date{%
 Trinity University \\
    \today
}

\begin{document}

\maketitle

\hrule

\begin{abstract}
The main goal of this paper is to assign two combinatorial interpretations to the elements of the Rascal Triangle defined by Angorro et al \cite{ALT}. The first interpretation involves counting ascents in binary words, while the second interpretation involves pattern avoidance in the ascent sequences studied by
Duncan and Steingr\'imsson  \cite{DS}, and related directly to a more recent paper by Baxter and Pudwell \cite{BP}.
\end{abstract}

Angorro et al  \cite{ALT} define a Pascal-like triangular sequence, which they call the \emph{Rascal Triangle}. Accordingly, let's call the numbers in the $n$-th row and $k$-th column of this triangle the \emph{n,k-th Rascal number}, $R_{n,k}$, which corresponds to the OEIS sequence \href{https://oeis.org/A077028}{\texttt{A077028}} \cite{OEIS}. Then the authors define\footnote{They don't technically do it \emph{this} way in \cite{ALT}, because they don't give an explicit indexing, but this is the corresponding recursion based on their description. The same is true for Fleron's note \cite{Fle}.} $R_{0,0} = R_{1,0} = R_{1,1} =1$, and for $n \geq 2$,
\begin{equation}
    R_{n,k} = \frac{R_{n-1,k}R_{n-1,k-1} +1}{R_{n-2,k-1}}, 
    \label{orec}
\end{equation}
where $R_{n,k} = 0$ whenever $k>n$, $n < 0$, or $k<0$. The first few rows of the Rascal Triangle are shown below, where the top row is the 0-th row and the leftmost column is the 0-th column, and any (unshown) entries above, to the left, or to the right of the given entries are equal to 0.

\begin{center}
\begin{tabular}{ccccccc}

1&&&&&& \\ 
1&1&&&&& \\ 
1&2&1&&&& \\ 
1&3&3&1&&& \\ 
1&4&5&4&1&& \\ 
1&5&7&7&5&1& \\ 
1&6&9&10&9&6&1  
\end{tabular}
\end{center}

\noindent Angorro et al then show that $R_{n,k}$ is indeed an integer for all $n,k$, which is not immediately obvious from Equation (\ref{orec}), and they do this by showing, algebraically, that $R_{n,k} = k(n-k) + 1$. For instance we can see in our triangle above that $R_{6,3} = 10$, and it is in fact the case that  $10= 3(6-3) + 1$. In a subsequent note by Fleron \cite{Fle}, it is shown, again algebraically, that the Rascal numbers also satisfy the following linear recurrence, subject to identical initial conditions given above:
\begin{equation}
R_{n,k} = R_{n-1,k} + R_{n-1,k-1} - R_{n-2,k-1} + 1.
\label{fleron}
\end{equation}

\noindent Subsequently, a note by Ashfaque \cite{Ash} gives a single identity involving Rascal numbers and binomial coefficients (our Theorem \ref{trianglesum}, which that author proves algebraically, but we prove combinatorially). Later, an extensive paper by Hotchkiss \cite{Hot}  generalizes the idea of the Rascal Triangle, but this work is also algebraic, and giving a combinatorial sense of these generalized objects is not the goal of that work. And yet, as Rascal numbers may be defined in a way that \emph{so} closely resembles the recursion for binomial coefficients, it seems like some straightforward combinatorial interpretation(s) of Rascal numbers must exist.


\section{Ascents \& Binary Words}

Given any nonegative integer $m$, define $[m] = \{0,1,2,\ldots,m\}$. Let $w = w_1w_2 \cdots w_n$ denote a string of integers from $[m]$, called a \emph{word of length n over} $[m]$, where $w$ is a \emph{binary word} if $w_i \in [1]$ for all $1 \leq i \leq n$. Given $w$, we define an \emph{ascent position} of $w$ to be any index $1 \leq i \leq n-1$ such that $w_i < w_{i+1}$, and we define the \emph{ascent number} of $w$ to be the number of ascent positions of $w$, denoted by asc$(w)$. For example, $w = 2051159858$ is a word of length 10 over $[9]$, and asc$(2\underline{0}51\underline{1}\underline{5}98\underline{5}8) = 4$, where the ascent positions of $w$ have been underlined. Note that the word of length $n = 0$, denoted by $\epsilon$, is called the \emph{empty word}, and asc$(\epsilon) = 0$. From here onward we adopt the notation $x^j$ to denote the letter $x$ repeated $j$ times; for example, we may write the word $u = 11110220005$ as $u = 1^402^20^35$. Now, define $B_k(n)$ to be the set of binary words of length $n$ with exactly $k$ 1's and at most 1 ascent, and set $b_k(n) = |B_k(n)|$. As an example, 
\[B_4(6) = \{1^40^2, 1^3010, 1^30^21, 1^201^20, 1^20^21^2, 101^30, 10^21^3, 0^21^4, 01^40\},\] so that $b_4(6) = 9$, which happens to be exactly $R_{6,4}$. We are now ready to give our first combinatorial interpretation of Rascal numbers. 

\begin{theorem} For any $n,k \in \Z$, $R_{n,k} = b_k(n)$.
\label{ras-bin}
\end{theorem}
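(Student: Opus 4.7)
The plan is to leverage the closed form $R_{n,k} = k(n-k) + 1$ already established by Angorro et al, and show that $b_k(n)$ evaluates to this same expression by direct enumeration. Since every $w \in B_k(n)$ has at most one ascent, I would partition $B_k(n)$ into the subset $B_k^0(n)$ of words with zero ascents and the subset $B_k^1(n)$ of words with exactly one ascent, then count each piece separately.

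For $B_k^0(n)$, a binary word with no ascent is weakly decreasing, so (assuming $0 \leq k \leq n$) the only possibility is $1^k 0^{n-k}$, contributing a single word. For $B_k^1(n)$, I would argue that any binary word with exactly one ascent has a unique four-block decomposition $w = 1^a 0^b 1^c 0^d$ with $a, d \geq 0$ and $b, c \geq 1$: the single ascent is forced to be the unique $0 \to 1$ transition, and reading maximal runs of equal letters before and after this transition produces the four blocks. The constraints $a + c = k$ and $b + d = n-k$ then reduce the count to the number of pairs $(a, d) \in \{0, 1, \ldots, k-1\} \times \{0, 1, \ldots, n-k-1\}$, namely $k(n-k)$. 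Summing gives $b_k(n) = 1 + k(n-k) = R_{n,k}$. The boundary cases $k \notin \{0, 1, \ldots, n\}$ and $n < 0$ give $B_k(n) = \emptyset$ and $R_{n,k} = 0$, matching trivially, while $k \in \{0, n\}$ collapses the one-ascent family to the empty set and leaves only the single weakly decreasing word.

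The only real care required is in the second step, namely verifying that the four-block form is both necessary and uniquely determined by $w$; this is essentially a remark about run-length encoding, so I expect no genuine obstacle. An alternative route would be to match the linear recurrence (\ref{fleron}) of Fleron via a direct bijective or inclusion-exclusion argument on $B_k(n)$, but the subtractive term $-R_{n-2,k-1}$ together with the additive $+1$ would require a sign-reversing involution plus a distinguished fixed point, which is strictly more delicate than the two-case enumeration above. For the purposes of the theorem I would therefore keep to the direct count, and note that it implicitly gives a combinatorial reading of the closed form $k(n-k) + 1$: the ``$+1$'' is the unique descending word, while the ``$k(n-k)$'' counts choices of where the prefix of initial $1$'s ends and where the suffix of terminal $0$'s begins.
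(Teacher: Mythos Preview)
Your proposal is correct and follows essentially the same approach as the paper: both partition $B_k(n)$ by ascent count, identify the unique zero-ascent word $1^k0^{n-k}$, parametrize the one-ascent words by a four-block form (your $1^a0^b1^c0^d$ is the paper's $1^{k-i}0^{j}1^{i}0^{n-k-j}$), count $k(n-k)$ such words, and then invoke the closed form $R_{n,k}=k(n-k)+1$. The only cosmetic difference is that the paper verifies the small initial cases $b_0(0),b_0(1),b_1(1)$ explicitly before passing to $n\ge 2$, whereas you absorb these into your boundary discussion.
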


\begin{proof}  First we see that $B_k(n)$ is the empty set whenever $k > n$ or either of $n$ or $k$ is negative, and so $b_k(n) = 0 = R_{n,k}$ in those cases. Next, $b_0(0) = |B_0(0)| = |\{\epsilon\}| = 1 = R_{0,0}$; similarly, $B_0(1) = \{0\}$ and $B_1(1) = \{1\}$, so that $b_0(1) = 1 = R_{1,0}$ and $b_1(1) = 1 = R_{1,1}$. Therefore, the initial conditions for $b_k(n)$ and $R_{n,k}$ are identical.

Now suppose $0 \leq k \leq n$ and $n \geq 2$. Then any $b \in B_k(n)$ has one of following two forms: 
    \begin{enumerate}
        \item[i.] $1^{k} \; 0^{n-k}$ if $b$ has 0 ascents, or
        \item[ii.] $1^{k-i} \; 0^{j} \; 1^{i} \; 0^{n-k-j}$ with $0 < i \leq k$ and $0 < j \leq n-k$ if $b$ has 1 ascent.
    \end{enumerate} Accordingly, there is only one such word in $B_k(n)$ with 0 ascents.  To construct the words in $B_k(n)$ with exactly 1 ascent, we simply choose allowed values for $i,j$. We can see that there are $k$ choices for $i$, namely $1, 2, \ldots, k$, and similarly, there are $n-k$ choices for $j$. Thus, there are $k(n-k)$ elements of $B_k(n)$with exactly 1 ascent, giving $b_k(n) = 1 + k(n-k) = R_{n,k}$.
\end{proof}

We now prove many other facts about Rascal numbers using this combinatorial interpretation, beginning with showing that these numbers are unimodal and symmetric. To see unimodality, we need only notice that $b_k(n)$ is quadratic in $k$, and thus each row of the Rascal Triangle consists of a unimodal sequence. To show symmetry, we first give a couple of standard definitions for words. Given the binary word $b = b_1b_2\cdots b_n$, define the \emph{reverse} of $b$ to be $b^r = b_n \cdots b_1b_2$, and the \emph{complement} of $b$ to be $b^c = b_1'b_2' \cdots b_n'$, where $b_i' = 0$ if $b_i = 1$ and $b_i' = 1$ if $b_i = 0$. Second, notice that the reverse and complement operations are their own inverses, that is, $b = (b^r)^r = (b^c)^c$.

\begin{theorem} \label{sym}
For $n,k \in \Z$ with $0 \leq k \leq n$, $R_{n,k} = R_{n,n-k}$.
\end{theorem}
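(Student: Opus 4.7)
The plan is to give a bijective proof using the combinatorial interpretation $R_{n,k} = b_k(n)$ from Theorem \ref{ras-bin}. Specifically, I will construct an explicit bijection $\phi: B_k(n) \to B_{n-k}(n)$ defined by $\phi(b) = (b^c)^r$, the reverse of the complement of $b$. Since both the reverse and complement operations are their own inverses and (as can be verified letter-by-letter) commute with each other, $\phi$ will itself be an involution, so showing it maps $B_k(n)$ into $B_{n-k}(n)$ will suffice.

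The first task is to check that $\phi$ has the right image. If $b \in B_k(n)$, then $b^c$ has exactly $n-k$ ones (complementation swaps 0's and 1's), and reversing does not change the number of 1's, so $\phi(b)$ has exactly $n-k$ ones.

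The main (minor) obstacle is tracking what happens to the ascent count. Here I observe two facts: if $b = b_1\cdots b_n$, then a position $i$ is an ascent of $b^c$ iff $b_i' < b_{i+1}'$ iff $b_i > b_{i+1}$, i.e., iff $i$ is a \emph{descent} of $b$. Similarly, the reversal $b^r$ sends ascents to descents and vice versa (an ascent at position $i$ in $b$ becomes a descent at position $n-i$ in $b^r$). Thus each of reverse and complement individually swaps the number of ascents and descents, and their composition $\phi$ preserves the number of ascents. In particular, if $b$ has at most 1 ascent, so does $\phi(b)$, confirming $\phi(b) \in B_{n-k}(n)$.

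Finally, because $\phi$ is an involution (using that reverse and complement commute and each is self-inverse), it is a bijection between $B_k(n)$ and $B_{n-k}(n)$, and we conclude $R_{n,k} = b_k(n) = b_{n-k}(n) = R_{n,n-k}$. As a sanity check, one may observe that this is consistent with the closed form $k(n-k)+1 = (n-k)k + 1$ noted earlier, but the bijective argument is in keeping with the combinatorial flavor of the paper.
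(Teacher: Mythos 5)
Your proposal is correct and follows essentially the same route as the paper: both use the reverse--complement map (the paper writes it as $(b^r)^c$, which agrees with your $(b^c)^r$ since the two operations commute) as a bijection $B_k(n) \to B_{n-k}(n)$. The only cosmetic difference is that the paper verifies membership by writing a word in the canonical form $1^v0^u1^t0^s$ and computing its image explicitly, whereas you track the ascent count abstractly via the ascent--descent swap; both are fine.
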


\begin{proof} 
Given $b =1^v0^u1^t0^s \in B_k(n)$, define the function $f$ by $f(b) = (b^r)^c$. Notice that $b^r = 0^s1^t0^u1^v$ still has length $n$ and $k$ 1's. Then $(b^r)^c = 1^s0^t1^u0^v$ has length $n$ and $n-k$ 1's. Accordingly, $f$ is a map from $B_k(n)$ into $B_{n-k}(n)$. However, since $f$ is the composition of two invertible functions, it is itself invertible, and thus a bijection, giving that $|B_k(n)| = |B_{n-k}(n)|$, as desired.
\end{proof}

Next we turn our attention to Fleron's recursion from Equation (\ref{fleron}), which we prove using the Principle of Inclusion/Exclusion, and then we subsequently prove a generalization of Equation (\ref{orec}). Both proofs rely heavily on the following lemma.

\begin{lemma} \label{interp}
    For $n,k,\ell,u \in \Z$ with $0 \le \ell, u$ and $\ell < k < n - u$, $R_{n-u-\ell,k-\ell}$ counts the number of elements in $B_{k}(n)$ which start with at least $\ell$ 1's and end with at least $u$ 0's.
\end{lemma}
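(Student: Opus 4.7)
The plan is to produce an explicit bijection between $B_{k-\ell}(n-u-\ell)$ and the subset of $B_k(n)$ consisting of those words that start with at least $\ell$ ones and end with at least $u$ zeros, and then to invoke Theorem~\ref{ras-bin}. Given $b' \in B_{k-\ell}(n-u-\ell)$, I would define $\phi(b') = 1^\ell\, b'\, 0^u$. Clearly $\phi(b')$ has length $n$, contains exactly $k$ ones, and by construction begins with at least $\ell$ ones and ends with at least $u$ zeros.

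The key verification is that $\phi(b')$ has at most one ascent. The only adjacencies in $\phi(b')$ that do not already appear in $b'$ are those entirely within the prepended $1^\ell$ block, those entirely within the appended $0^u$ block, the junction between the prepended block and the first letter of $b'$, and the junction between the last letter of $b'$ and the appended block. In the first two cases the adjacent letters are equal, so no ascent is formed. At the first junction a $1$ is followed by a $0$ or a $1$, neither of which is an ascent. At the second junction, any letter is followed by a $0$, which is also not an ascent. Hence asc$(\phi(b'))\;=\;$asc$(b') \leq 1$, so $\phi(b')$ lies in the prescribed subset of $B_k(n)$.

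For the inverse, I would simply strip the leading $\ell$ ones and trailing $u$ zeros from any qualifying word. The hypotheses $\ell < k$ and $k < n-u$ ensure that $1 \leq k - \ell \leq n - u - \ell$, so $B_{k-\ell}(n-u-\ell)$ is a legitimate set, and the qualifying words have the needed leading $1$'s and trailing $0$'s available to remove; moreover, the result is a contiguous substring of a word with at most one ascent, so it also has at most one ascent and therefore lies in $B_{k-\ell}(n-u-\ell)$. Since $\phi$ and the stripping map are patently mutual inverses, the desired count equals $|B_{k-\ell}(n-u-\ell)| = R_{n-u-\ell,k-\ell}$ by Theorem~\ref{ras-bin}.

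I do not foresee a substantive obstacle here; the only mild care needed is in the boundary cases $\ell = 0$ or $u = 0$, where one or both inserted blocks are empty but every step above goes through without change. The non-obvious content of the lemma is really just the observation that padding by constant blocks of the appropriate letters preserves the ``at most one ascent'' property and therefore identifies a natural sub-population of $B_k(n)$ with a smaller $B$-set.
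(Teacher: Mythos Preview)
Your proof is correct and follows essentially the same approach as the paper: both arguments use the bijection ``prepend $1^{\ell}$ and append $0^{u}$'' (equivalently, strip them) between $B_{k-\ell}(n-u-\ell)$ and the indicated subset of $B_k(n)$, then invoke Theorem~\ref{ras-bin}. The paper's version is a two-sentence sketch, whereas you spell out carefully why neither junction introduces a new ascent and why the stripped substring still has at most one ascent; these details are exactly what the paper elides.
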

\begin{proof}


    Given any element of $B_{k}(n)$ that starts with at least $\ell$ 1's and ends with at least $u$ 0's, we can remove $\ell$ leading 1's and $u$ trailing 0's to get an element of $B_{k-\ell}(n-\ell-u)$.
    Since this process is invertible, we have a bijection, and thus we obtain our desired result.
\end{proof}



\begin{theorem}[Fleron's recursion] \label{rec1} Given $R_{0,0} = R_{1,0} = R_{1,1} = 1$ and $R_{n,k} = 0$ if $k > n$ or if $n$ or $k$ is negative, 
\[R_{n,k} = R_{n-1,k} + R_{n-1,k-1} - R_{n-2,k-1} + 1.\] 
\end{theorem}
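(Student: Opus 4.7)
The plan is to prove the recursion combinatorially via Theorem \ref{ras-bin}, identifying both sides with cardinalities of subsets of $B_k(n)$, and using Lemma \ref{interp} together with inclusion-exclusion as the main engine.

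First, I would apply Lemma \ref{interp} three times to reinterpret each Rascal number on the right. Taking $(\ell, u) = (0,1)$ gives $R_{n-1,k}$ as the number of words in $B_k(n)$ ending with at least one $0$. Taking $(\ell, u) = (1,0)$ gives $R_{n-1,k-1}$ as the number of words in $B_k(n)$ starting with at least one $1$. Taking $(\ell, u) = (1,1)$ gives $R_{n-2,k-1}$ as the number of words in $B_k(n)$ that both start with a $1$ and end with a $0$. By inclusion--exclusion, the alternating combination $R_{n-1,k} + R_{n-1,k-1} - R_{n-2,k-1}$ therefore counts exactly the words of $B_k(n)$ that start with $1$ or end with $0$.

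The remaining $+1$ must account for the words of $B_k(n)$ that start with $0$ and end with $1$. Invoking the structural description from the proof of Theorem \ref{ras-bin}, any such word is of the form $1^{k-i} 0^{j} 1^{i} 0^{n-k-j}$; starting with $0$ forces $k-i = 0$ and ending with $1$ forces $n-k-j = 0$, which pins down the unique word $0^{n-k} 1^{k}$. Thus the residual count is exactly $1$ in the interior range $1 \leq k \leq n-1$, yielding the recursion; the extreme columns $k=0$ and $k=n$, where $B_k(n)$ consists of a single word, can be verified directly against the initial conditions.

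The main (minor) obstacle is recognizing what the $+1$ is counting combinatorially: once one sees that the at-most-one-ascent condition forces $0^{n-k}1^k$ to be the only exceptional word, the rest is just clean bookkeeping. No delicate calculation is needed beyond the three applications of Lemma \ref{interp}.
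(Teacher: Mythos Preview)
Your proof is correct and rests on the same two ingredients as the paper's --- Lemma~\ref{interp} and the identification of a single exceptional word --- but your inclusion--exclusion is packaged more symmetrically. The paper instead partitions $B_k(n)$ by its \emph{last} letter: words ending in $0$ are counted directly by $R_{n-1,k}$, while words ending in $1$ are viewed as elements of $B_{k-1}(n-1)$ with a $1$ appended, which overcounts by exactly those shorter words that already had one ascent and ended in a $0$; that overcount is $R_{n-2,k-1}-1$. Your version --- treating ``starts with $1$'' and ``ends with $0$'' as the two events in a straight inclusion--exclusion and recognizing $0^{n-k}1^{k}$ as the lone word in the complement --- is a cleaner and more transparent arrangement of the same idea.

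One small caveat: Lemma~\ref{interp} as stated requires the strict inequalities $\ell < k < n-u$, so your three applications are formally licensed only for $2 \le k \le n-2$; the boundary cases $k=1$ and $k=n-1$ technically need the same kind of direct check you already give for $k=0$ and $k=n$ (or the observation that the bijection in the lemma's proof still goes through there).
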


\begin{proof}
We have that the left side of this recursion, $R_{n,k}$, counts the number of words in $B_k(n)$, and all of these words either end in a 0 or end in a 1. By Lemma \ref{interp}, there are exactly $R_{n-1,k}$ of these that end in 0. Similarly, we may add a 1 to the end of any binary word of length $n-1$ with exactly $k-1$ 1's and at most 1 ascent, and we will obtain all of the words counted ending in 1 counted by $R_{n,k}$: there are $R_{n-1,k-1}$ such words. However, we have possibly overcounted in this latter case, because any word of length $n-1$ that was of the form $1^{k-1-u}0^t1^u0^v$ with $t, u,v > 0$ will have 2 ascents when we add a 1 to the end. Reapplying our lemma, there are $R_{n-2,k-1} - 1$ such words, because $1^{k-1}0^{n-k+1}$ is counted by $R_{n-2,k-1}$, but this is the only such word which is not of the form $1^{k-1-u}0^t1^u0^v$ with $t, u,v > 0$. Thus, the number of words in $B_k(n)$ ending in 1 is $R_{n-1,k-1} - (R_{n-2,k-1} - 1)$.
\end{proof}


To make the following proof a bit more clear and concise, we first introduce some notation.
Throughout the following proof, we use $\ast$ to mean any ``appropriate" binary word.
Formally, for any binary words $w_{0}$ and $w_{2}$, the $\ast$ in $w_{0} \ast w_{2} \in B_{k}(n)$ represents any binary word $w_{1}$ such that $w_{0} w_{1} w_{2} \in B_{k}(n)$.
For example, if we have $1 \ast 0 \in B_2(4)$, then $\ast$ represents any word in the set $\{ 10, 01 \}$.

\begin{theorem}[Generalization of Equation (\ref{orec})] \label{genrec}
    Let $n,k,\ell,u \in \Z$ with $0 \le \ell, u$ and $\ell \le k \le n-u$.
    Then, we have
\begin{equation}
           R_{n,k} = \frac{R_{n-u,k}R_{n-\ell,k-\ell} + u \ell}{R_{n-u-\ell,k-\ell}}.
           \label{grec}
\end{equation}
\end{theorem}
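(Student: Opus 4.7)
The plan is to clear denominators and prove the equivalent identity
\[R_{n,k}\, R_{n-u-\ell,\, k-\ell} \;=\; R_{n-u,\, k}\, R_{n-\ell,\, k-\ell} \;+\; u\ell\]
by a bijection. By Theorem \ref{ras-bin} and Lemma \ref{interp}, if I set $A := B_k(n)$ and let $B, C, D \subseteq A$ denote respectively the subsets of words that ``start with at least $\ell$ 1's and end with at least $u$ 0's,'' ``end with at least $u$ 0's,'' and ``start with at least $\ell$ 1's,'' then $B = C \cap D$, and the four Rascal numbers above are $|A|, |B|, |C|, |D|$. The key observation is that $E := A \setminus (C \cup D)$ has exactly $u\ell$ elements: using the $(i,j)$-parameterization of 1-ascent words $1^{k-i}0^j 1^i 0^{n-k-j}$ from the proof of Theorem \ref{ras-bin} (and writing $\star := 1^k 0^{n-k}$ for the 0-ascent word), $E$ is precisely the ``corner rectangle'' $\{(i,j): k-\ell < i \le k,\; n-k-u < j \le n-k\}$. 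Hence it suffices to construct a bijection $\phi: A \times B \to (C \times D) \sqcup E$.

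Given $(w_1, w_2) \in A \times B$ (so $w_2 \in D$ automatically), I define $\phi$ by cases. If $w_1 \in C$, set $\phi(w_1, w_2) := (w_1, w_2)$. Otherwise $w_1$ must be a 1-ascent word $(i_1, j_1)$ with $j_1 > n-k-u$, and I branch on $w_2$: if $w_2 = (i_2, j_2)$ is also a 1-ascent word, I swap the $j$-coordinates, setting $\phi(w_1, w_2) := ((i_1, j_2),\, (i_2, j_1))$, which lands in $C \times (D \setminus B)$ because $j_2 \le n-k-u$ and $i_2 \le k-\ell$; if instead $w_2 = \star$, I send $(w_1, \star) \mapsto (\star, w_1)$ when $i_1 \le k-\ell$ (so $w_1 \in D \setminus C$), and $(w_1, \star) \mapsto w_1 \in E$ when $i_1 > k-\ell$ (so $w_1 \in E$ itself).

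Verifying that $\phi$ is a well-defined bijection is mostly bookkeeping: the four cases have pairwise disjoint images, namely $C \times B$, $(C \setminus \{\star\}) \times (D \setminus B)$, $\{\star\} \times (D \setminus B)$, and $E$, which collectively reassemble into $(C \times D) \sqcup E$, and each case is manifestly invertible. The only delicate point -- and, in my view, the main obstacle to discovering the proof -- is the case split when $w_2 = \star$: the $uk$ pairs of the form $(w_1, \star)$ with $w_1 \in A \setminus C$ must be partitioned into the $u(k-\ell)$ pairs sent to $\{\star\} \times (D \setminus B)$ (exactly the portion of $C \times D$ not already covered by the other cases) and the $u\ell$ pairs sent to $E$, and this split is governed precisely by whether $i_1$ lies in $\{1, \ldots, k-\ell\}$ or in $\{k-\ell+1, \ldots, k\}$.
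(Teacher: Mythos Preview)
Your proof is correct and takes essentially the same approach as the paper: both interpret the cross-multiplied identity as $|A\times B| = |C\times D| + u\ell$ and establish it bijectively on pairs in $B_k(n)^2$, using the same two basic moves (swap the $j$-parameters of the two words; swap the two components outright) and identifying the surplus $u\ell$ with the same set $E$ (paired with $\star$). The only difference is organizational---the paper proceeds by three successive cancellations between $S_0=A\times B$ and $T_0=C\times D$ rather than writing down a single map $\phi$---and on the piece $(D\setminus C)\times(B\setminus\{\star\})$ you happen to apply the $j$-swap where the paper applies the component swap, but the constructions are otherwise interchangeable.
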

\begin{proof}
    We will equivalently prove that \[
        u\ell = R_{n,k}R_{n-\ell-u,k-\ell} - R_{n-u,k}R_{n-\ell,k-\ell}.
    \]
    Using Lemma \ref{interp}, we have that \[
        \begin{array}{ccc}
            S_{0} = \left\{ (w, \; 1^{\ell} \ast 0^{u}) \in \left( B_{k}(n) \right)^{2} \right\} & \text{and} &
            T_{0} = \left\{ (\ast \; 0^{u}, \; 1^{\ell} \ast) \in \left( B_{k}(n) \right)^{2} \right\}
        \end{array}
    \]
    are counted by $R_{n,k}R_{n-\ell-u,k-\ell}$ and $R_{n-u,k}R_{n-\ell,k-\ell}$, respectively.
    First, note that elements of the form $(\ast \; 0^{u}, 1^{\ell} \ast 0^{u})$ are in both $S_{0}$ and $T_{0}$.
    So they naturally pair off with one another, which leaves us with \[
        \begin{array}{c}
            S_{1} = \left\{ (\ast \; 10^{x}, \; 1^{\ell} \ast 0^{u}) \in \left( B_{k}(n) \right)^{2} \mid 0 \le x < u \right\} \\ 
            \text{and} \\
            T_{1} = \left\{ (\ast \; 0^{u}, \; 1^{\ell} \ast 10^{x}) \in \left( B_{k}(n) \right)^{2} \mid 0 \le x < u \right\}.
        \end{array}
    \]
    Now we can apply the map $(w,z) \mapsto (z,w)$ on elements in $T_{1}$ of the form $(1^{\ell} \ast 0^{u}, 1^{\ell} \ast 10^{x})$ where $0 \le x < u$.
    This results in all elements of the form $(1^{\ell} \ast 10^{x}, 1^{\ell} \ast 0^{u})$ in $S_{1}$ being paired off with an element of $T_{1}$.
    Therefore we are left with \[
        \begin{array}{c}
            S_{2} = \left\{ (1^{y}0 \ast 10^{x}, \; 1^{\ell} \ast 0^{u}) \in \left( B_{k}(n) \right)^{2} \mid 0 \le x < u, 0 \le y < \ell \right\} \\ 
            \text{and} \\
            T_{2} = \left\{ (1^{y}0 \ast 0^{u}, \; 1^{\ell} \ast 10^{x}) \in \left( B_{k}(n) \right)^{2} \mid 0 \le x < u, 0 \le y < \ell \right\}.
        \end{array}
    \]
    Note that $T_{2}$ does not contain any elements where either component has 0 ascents as those are of the form $1^{k}0^{n-k}$, but $x < u \le n-k$ and $y < \ell \le k$.
    So both components of a pair in $T_{2}$ must have 1 ascent.
    Note that knowing that a binary word has 1 ascent, starts with $a$ 1's, and ends with $b$ 0's, is sufficient to uniquely determine the word.
    Therefore, $1^{a}0 \ast 10^{b}$ must be the word $1^{a}0^{n-k-b}1^{k-a}0^{b}$, and we will exploit this to make the following map easier to read.
    We apply the map \[ 
        (1^{y}0 \ast 10^{a}, 1^{b}0 \ast 10^{x}) \mapsto (1^{y}0 \ast 10^{x}, 1^{b}0 \ast 10^{a}) 
    \] to all elements of $T_2$ to get elements of $S_{2}$---this simply interchanges the number of 0's which each component ends in, and adjusting the number of zeros elsewhere in the word (to ensure the words in each component remain in $B_k(n)$).
    Note that this map is injective as we only need to know $a,b,x,y$ to reconstruct the original pair, and that information can be extracted from the image of the pair.
    Also, since $0 < n-k-a$ and $0 < k-b$, the above map does not map to elements of the form $(1^{y} \ast 0^{x}, \; 1^{k}0^{n-k}) \in S_{2}$ where $0 \le x < u$ and $0 \le y < \ell$.
    Since choosing a value for $x$ and $y$ uniquely determines such an element of $S_{2}$, we only need to count the number of choices for $x,y$.
    There are $u$ choices for $x$ and $\ell$ choices for $y$ so there are exactly $u \ell$ elements of $S_{2}$ which are not in the image of this map.
    Therefore we conclude that \begin{align*}
        u \ell &= |S_{2}| - |T_{2}| \\
        &= |S_{1}| - |T_{1}| \\
        &= |S_{0}| - |T_{0}| \\
        &= R_{n,k}R_{n-u-\ell,k-\ell} - R_{n-u,k}R_{n-\ell, k-\ell}.
    \end{align*}
\end{proof}

The following results provide two other recursive relationships between Rascal numbers.

\begin{theorem}
    For $0 \leq n,m,k$, \begin{equation}
        R_{n+m,k} = R_{n,k} + R_{m+k,k} - 1.
    \label{recm}
\end{equation}
\end{theorem}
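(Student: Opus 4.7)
The plan is to give a combinatorial proof via Theorem~\ref{ras-bin}, partitioning $B_k(n+m)$ according to whether a word ends in at least $m$ zeros.

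I will first invoke Lemma~\ref{interp} with $\ell = 0$ and $u = m$ (applied to words of length $n+m$) to see that the piece of $B_k(n+m)$ consisting of words ending in at least $m$ zeros has size $R_{(n+m)-m,\,k} = R_{n,k}$. It then remains to show that the complementary piece---words whose trailing zero-run has length strictly less than $m$---has size $R_{m+k,k} - 1$.

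For this, I will use the explicit description from the proof of Theorem~\ref{ras-bin}: every element of $B_k(n+m)$ is either $1^k 0^{n+m-k}$ or of the form $1^{k-i} 0^j 1^i 0^{n+m-k-j}$ with $0 < i \le k$ and $0 < j \le n+m-k$. The zero-ascent word already lies in the first piece, so any word in the complementary piece has one ascent and satisfies $n+m-k-j < m$, i.e.\ $j > n-k$. I would then define a map from the complementary piece to $B_k(m+k)$ by deleting $n-k$ zeros from the interior zero-run,
\[
1^{k-i} 0^j 1^i 0^{n+m-k-j} \;\longmapsto\; 1^{k-i} 0^{j-(n-k)} 1^i 0^{n+m-k-j},
\]
whose inverse simply pads the middle zero-run back up. The image is a one-ascent binary word of length $m+k$ with $k$ ones, and as $(i,j)$ ranges over the allowed parameters the image hits every one-ascent element of $B_k(m+k)$ exactly once. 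Hence the complementary piece is in bijection with $B_k(m+k) \setminus \{1^k 0^m\}$, which has cardinality $R_{m+k,k} - 1$, and summing the two pieces yields $R_{n+m,k} = R_{n,k} + R_{m+k,k} - 1$.

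The only real obstacle is deciding on the right statistic to split by; once the trailing zero-run length is chosen as the dividing feature, the bijection is essentially forced and the rest is bookkeeping. I would also briefly check the degenerate parameter regimes (e.g.\ $n=k$, in which case $n-k=0$ and the deletion map is the identity, and $k=0$, in which case the complementary piece is empty) to confirm that the partition behaves correctly throughout.
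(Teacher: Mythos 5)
Your proposal is correct and matches the paper's own argument essentially step for step: the paper also splits $B_k(n+m)$ by whether the trailing run of 0's has length at least $m$, counts the first piece by Lemma~\ref{interp}, and handles the other piece by deleting $n-k$ zeros from the first (interior) zero-run to biject onto $B_k(m+k)\setminus\{1^k0^m\}$. No substantive differences to report.
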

\begin{proof} 
    By Theorem \ref{ras-bin}, $R_{n+m,k} = b_{k}(n+m)$, so we need to show that the left hand side of Equation (\ref{recm}) counts this as well.

By Lemma \ref{interp}, $R_{n,k}$ is the number of $w \in B_k(n+m)$ which end with at least $m$ 0's.
    So, we only need to count the number of $w \in B_k(n+m)$ which end in strictly less than $m$ 0's. 
    Since $w$ ends in less than $m$ 0's and there are $n+m-k$ total 0's in $w$, the first consecutive string of 0's must contain more than $n-k$ 0's. So, we can remove the first $n-k$ 0's in the string to get a word with $k$ 1's and $n+m-k-(n-k) = m$ 0's and one ascent.
    This new word is an element of $B_k(m+k) \setminus \left\{ 1^k0^m \right\}$.
    Furthermore, since this process is reversible, there are exactly $|B_k(m+k) \setminus \left\{ 1^k0^m \right\}| = R_{m+k.k} - 1$ such words.
    Summing these two cases, we get the right hand side of Equation (\ref{recm}), as desired.
\end{proof}

\begin{theorem}
    For $n,k \in \Z$ with $0 < k < n$, $kR_{n-1,k-1}-1 = (k-1)R_{n,k}.$
\end{theorem}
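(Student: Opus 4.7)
The plan is to reduce the claim to a simple counting statement about one-ascent words, building directly on the combinatorial interpretation from Theorem \ref{ras-bin}. First I would rearrange the identity algebraically into the equivalent form
\[
k\bigl(R_{n-1,k-1} - 1\bigr) = (k-1)\bigl(R_{n,k} - 1\bigr),
\]
which already hints at a clean bijective meaning for both sides.

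Next, by Theorem \ref{ras-bin} and the observation (noted in its proof) that the only zero-ascent element of $B_k(n)$ is $1^k 0^{n-k}$, the quantity $R_{n,k} - 1$ counts precisely the words in $B_k(n)$ with exactly one ascent. Those words admit the explicit parametrization $1^{k-i} 0^j 1^i 0^{n-k-j}$ with $1 \le i \le k$ and $1 \le j \le n-k$, giving $R_{n,k} - 1 = k(n-k)$ and, by the same argument, $R_{n-1,k-1} - 1 = (k-1)(n-k)$. Both sides of the reformulated identity therefore equal $k(k-1)(n-k)$, which finishes the proof.

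If a more explicit bijection is preferred, one can view $k(R_{n-1,k-1}-1)$ as counting triples in $\{1,\ldots,k\} \times \{1,\ldots,k-1\} \times \{1,\ldots,n-k\}$ and $(k-1)(R_{n,k}-1)$ as counting triples in $\{1,\ldots,k-1\} \times \{1,\ldots,k\} \times \{1,\ldots,n-k\}$, and then simply swap the first two coordinates. There is essentially no hard step; the only subtlety is recognizing that the $-1$ on the left of the original identity is exactly what is needed to remove the unique zero-ascent word on each side when passing from $b_k(n)$ to the count of one-ascent words.
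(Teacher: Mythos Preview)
Your argument is correct: the rearrangement $k(R_{n-1,k-1}-1)=(k-1)(R_{n,k}-1)$ is equivalent to the stated identity, and plugging in $R_{n,k}-1=k(n-k)$ and $R_{n-1,k-1}-1=(k-1)(n-k)$ makes both sides equal to $k(k-1)(n-k)$. The swap-the-coordinates bijection you mention is a legitimate (if trivial) witness.

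However, your route is genuinely different from the paper's. The paper avoids invoking the closed form $R_{n,k}=k(n-k)+1$ altogether and instead works directly with the sets $B_k(n)$: it introduces $S=\{\,w\in B_k(n):w\text{ starts with }1,\text{ with one of its }1\text{'s circled}\,\}$ and $T=\{\,w\in B_k(n):\text{a }1\text{ other than the first is circled}\,\}$, so that $|S|=kR_{n-1,k-1}$ (via Lemma~\ref{interp}) and $|T|=(k-1)R_{n,k}$, and then builds an explicit injection $f\colon T\to S$ whose image misses exactly the single element $1^k0^{n-k}$ with its first $1$ circled. Your approach is shorter and entirely self-contained once the formula is in hand; the paper's approach is more in keeping with its stated aim of giving bijective rather than algebraic proofs, and it illustrates a ``circle a distinguished letter'' technique that does not reduce to verifying a polynomial identity.
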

\begin{proof}
    Let $S$ be the set of $w \in B_{k}(n)$ where $w$ begins with a 1 and we've circle one of the 1's in $w$, and let $T$ be the set of $w \in B_{k}(n)$ where we've circled a 1 in $w$ which is not the first in $w$.
    By  Lemma \ref{interp}, $R_{n-1,k-1}$ counts $w \in B_{k}(n)$ which begin with a 1, we have
     \[
        |S| = kR_{n-1,k-1}  \text{ and } |T| = (k-1)R_{n,k},
    \]
    were $k$ and $k-1$ account for circling the 1's, respectively.
    So it suffices to construct an injective map from $T$ to $S$ whose image has cardinality $|S|-1$.
    The map, $f$, is as follows.
    Take a $w \in T$: \begin{enumerate}
        \item[(1)] If $w$ starts with a 1 then set $f(w) = w \in S$.
        \item[(2)] If $w$ starts with a 0 then all 1's in $w$ are located in a consecutive string.
        Spilt this consecutive string of 1's before the circled 1 then move the right half to the beginning of $w$.
        Since this new word, $z$, is still in $B_{k}(n)$, has a circled 1, and begins with a 1, it is an element of $S$.
        So set $f(w) = z$.
    \end{enumerate}
    Note that case (1) maps $w$ to a word whose first 1 is not circled while case (2) maps $w$ to a word whose first 1 is circled, so there is no overlap in the image of cases (1) and (2).
    Since both case (1) and (2) are easily inverted, $f$ is injective.
    Moreover, the element $b = 1^{k}0^{n-k}$ where the first 1 is circled is in $S$ but is not mapped to by $f$.
    To see this, we note that if there exists a $w \in T$ such that $f(w) = b$ then $w$ must fall into case (2) since the first 1 of $b$ is circled.
    But, since the first 1 of $w$ is not circled, applying $f$ to $w$ leaves an ascent but $\operatorname{asc}(b) = 0$.
    Since $f$ is injective and only 1 element of $S$ is not in the image of $f$, we conclude that $|T| = |\text{the image of } f| = |S|-1$, that is $(k-1)R_{n,k} = kR_{n-l,k-1} - 1$.
\end{proof}


\section{Some Identities} \label{identities}

Since our primary combinatorial interpretation of the Rascal numbers involves a subset of binary words of length $n$ with $k$ 1's and $n$ choose $k$ counts the total number of binary words with length $n$ and $k$ 1's, it seems natural to look for binomial-type identities involving $R_{n,k}$. To that end, we begin by proving various row and column sum identities for $R_{n,k}$.

\begin{theorem}[Row Sum] \label{rowsum}
For $0 \leq n \in \Z$, \begin{equation}
    \sum_{k=0}^{n} R_{n,k} = \binom{n+1}{3} + n+1.
\label{binomrowsum}
\end{equation}
\end{theorem}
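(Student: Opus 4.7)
The plan is to interpret both sides of Equation (\ref{binomrowsum}) combinatorially using Theorem \ref{ras-bin}. The left side $\sum_{k=0}^{n} R_{n,k}$ counts the disjoint union $\bigcup_{k=0}^{n} B_k(n)$, which is simply the set of all binary words of length $n$ having at most one ascent. So I would split this set according to the number of ascents (either $0$ or $1$) and match the two resulting counts to the summands $n+1$ and $\binom{n+1}{3}$ on the right.

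First I would handle the $0$-ascent case: any binary word of length $n$ with no ascents must have the form $1^a 0^{n-a}$ for some $0 \le a \le n$, so there are exactly $n+1$ such words, matching the $n+1$ term on the right.

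Next I would handle the $1$-ascent case: any binary word of length $n$ with exactly one ascent has the form $1^a 0^b 1^c 0^d$ where $a, d \ge 0$ and $b, c \ge 1$, subject to $a + b + c + d = n$. Substituting $b' = b-1$ and $c' = c-1$ yields $a + b' + c' + d = n - 2$ with all four variables nonnegative, so a standard stars-and-bars argument gives $\binom{(n-2) + 3}{3} = \binom{n+1}{3}$ such words, matching the binomial term on the right.

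There is no real obstacle here; the only mild subtlety is being careful with boundary cases (for example, $n = 0$ and $n = 1$ give zero 1-ascent words, and $\binom{n+1}{3}$ correctly returns $0$), and being sure that $a$ and $d$ are allowed to be $0$ while $b$ and $c$ must be positive to guarantee exactly one ascent. Adding the two counts yields $(n+1) + \binom{n+1}{3}$, which is the right-hand side of Equation (\ref{binomrowsum}).
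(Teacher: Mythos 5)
Your proof is correct and follows essentially the same route as the paper: both split the words in $\bigcup_{k=0}^{n} B_k(n)$ by ascent number, count the $n+1$ no-ascent words directly, and use a stars-and-bars count of the compositions $1^a0^b1^c0^d$ (the paper parameterizes with three variables plus a slack variable, but this is the same computation) to obtain $\binom{n+1}{3}$.
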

\begin{proof}
    Setting \[
        B(n) = \bigcup_{k=0}^{n} B_k(n),
    \]
     we first note that the left hand side of Equation (\ref{binomrowsum}) counts $B(n)$.
    
Now pick $w \in B(n)$. If $\operatorname{asc}(w) = 0$, then $w=1^{k}0^{n-k}$ for some $0 \le k \le n$.
        Since there are $n+1$ choices of $k$, there are exactly $n+1$ such $w$. Otherwise $\operatorname{asc}(w) = 1$, so that $w=1^{x}0^{z}1^{y}0^{n-x-y-z}$ where $0 < z,y$ and $0 \le x$ with $x+y+z \le n$.
        Since choosing appropriate $x,y,z$ uniquely determines $w$, it suffices to count solutions to $x+y+z \le n$.
        If $x+y+z \le n$, then there exists some $t \in \Z$ with $t \ge 0$ such that $x+y+z+t = n$.
        Note that solutions to this equation are in one-to-one correspondence with positive integer solutions to $x'+y+z+t' = n+2$.
        There are exactly \[
            \binom{n+2-1}{4-1} = \binom{n+1}{3}
        \]
        such solutions, so there are the same number of $w \in B(n)$ with $\operatorname{asc}(w) = 1$.

    Summing these two cases, we find that the right hand side of Equation (\ref{binomrowsum})  also counts $B(n)$, giving the desired equality.
\end{proof}

\begin{theorem}[Column Sum] \label{colsum}
For $0 \leq k, r \in \Z$, \begin{equation}
    \sum_{i=0}^{r} R_{k+i,k} = k\binom{r+1}{2} + r + 1.
\label{binomcolsum}
\end{equation}
\end{theorem}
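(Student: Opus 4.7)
The plan is to mimic the combinatorial strategy used for Theorem \ref{rowsum}. By Theorem \ref{ras-bin}, each summand satisfies $R_{k+i,k} = |B_k(k+i)|$, and since words of different lengths are distinct the sets $B_k(k), B_k(k+1), \ldots, B_k(k+r)$ are pairwise disjoint. Hence the left side of Equation (\ref{binomcolsum}) counts the union $C(k,r) := \bigcup_{i=0}^{r} B_k(k+i)$. My strategy is to enumerate $|C(k,r)|$ directly by splitting on the number of ascents, and then match the two resulting terms to the summands $r+1$ and $k\binom{r+1}{2}$ on the right.

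For the first bucket, any $w \in C(k,r)$ with $\operatorname{asc}(w)=0$ has the form $w = 1^{k} 0^{i}$, which is determined by the single choice $i \in \{0,1,\ldots,r\}$. This immediately yields $r+1$ words and accounts for the $+\,r+1$ term on the right side.

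For the second bucket, any $w \in C(k,r)$ with $\operatorname{asc}(w)=1$ takes the form $w = 1^{k-a} 0^{b} 1^{a} 0^{i-b}$ subject to $1 \le a \le k$, $1 \le b \le i$, and $i \le r$. Choosing $a$ contributes a factor of $k$, and the remaining enumeration reduces to counting pairs $(b,i)$ with $1 \le b \le i \le r$. I expect to count these either as size-$2$ multisets from $\{1,\ldots,r\}$, or, after the change of variables $c := i-b \ge 0$ and $t := r-i \ge 0$, as non-negative integer solutions to $(b-1)+c+t = r-1$. Either route produces $\binom{r+1}{2}$ pairs by stars and bars, giving a total of $k \binom{r+1}{2}$ words in this bucket. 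Summing the two cases yields the right side of Equation (\ref{binomcolsum}).

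The only real obstacle is choosing a clean parameterization so that $\binom{r+1}{2}$ emerges transparently; I expect no substantive difficulty, since the argument is structurally identical to the proof of Theorem \ref{rowsum}, just with $i$ ranging over $\{0, \ldots, r\}$ instead of being fixed and with the length constraint $x+y+z \le n$ replaced by the constraint $b+c \le r$ on the zeros.
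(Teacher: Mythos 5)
Your proposal is correct and follows essentially the same route as the paper: both count the union $\bigcup_{i=0}^{r} B_k(k+i)$, obtain the $r+1$ zero-ascent words one per length, and count the one-ascent words as $k$ choices for the second block of 1's times $\binom{r+1}{2}$ choices for the 0-blocks via a stars-and-bars argument. The only difference is cosmetic (your parameters $(a,b,i)$ with $1 \le b \le i \le r$ versus the paper's $1^{k-x}0^{y+1}1^{x}0^{z}$ with $(y+1)+z \le r$), so no further changes are needed.
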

\begin{proof}
    Note that the left hand side of Equation (\ref{binomcolsum}) counts $B = \bigcup_{i=0}^{r} B_k(k+i)$, and any word in $B$ must have a length of at least $k$ but at most $k+r$.
    So, for each possible length, there is exactly one word in $B$ with 0 ascents.
    Therefore, there are $r+1$ such words in $B$.
    Now, we only need to count $w \in B$ such that $\operatorname{asc}(w)=1$.
    Such $w$ must have the form: $1^{k-x}0^{y+1}1^{x}0^{z}$ where $1 \le x \le k$, $0 \le y, z$ and $(y+1)+z \le r$.
    We note that a choice of $x,y,z$ uniquely determines $w$, and vice versa, so such 3-tuples are in one-to-one correspondence with $w \in B$ such that $\operatorname{asc}(w)=1$.
    Therefore, it suffices to count all such 3-tuples.
    If $(y+1)+z \le r$, then there exists $t \in \Z$ with $0 \le t$ and $(y+1)+z+t=r$, or equivalently, $y+z+t = r-1$.
    There are exactly \[
        \binom{(r-1)+3-1}{3-1} = \binom{r+1}{2}
    \]
    non-negative integer solutions to this equation, and since there are exactly $k$ choices for $x$, we have that there are exactly \[ k\binom{r+1}{2} \] such 3-tuples.
    Summing the two cases together, we find that \[
        \sum_{i=0}^{r} R_{k+i,k} = |B| = k\binom{r+1}{2} + r+1
    \]
    as desired.
\end{proof}

\begin{theorem}[Binomial-Weighted Row Sum]
    For $0 \leq n \in \Z$, \begin{equation}
        \sum_{k=0}^{n} \binom{n}{k}R_{n,k} =  2^{n-2}\binom{n}{2} + 2^{n}.
\label{wrowsum}
    \end{equation}
\end{theorem}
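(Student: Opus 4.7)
The plan is to extend the combinatorial interpretation underlying Theorem~\ref{ras-bin}. Since $\binom{n}{k}$ counts all binary words of length $n$ with exactly $k$ ones and, by Theorem~\ref{ras-bin}, $R_{n,k}$ counts the subset $B_{k}(n)$ of those having at most one ascent, the left-hand side of (\ref{wrowsum}) counts ordered pairs $(a,b)$ of binary words of length $n$ having the same number of $1$'s, where $b \in B_{k}(n)$.

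Next, I would split this count according to $\operatorname{asc}(b)$. If $\operatorname{asc}(b)=0$, then $b = 1^{k}0^{n-k}$ is forced, so each such pair is determined by $a$ alone, and summing over $k$ contributes $\sum_{k=0}^{n}\binom{n}{k}=2^{n}$, accounting for the $2^{n}$ term on the right. If $\operatorname{asc}(b)=1$, then the proof of Theorem~\ref{ras-bin} shows that, for fixed $k$, there are $k(n-k)$ choices of $b$, so this case contributes $\sum_{k=0}^{n}\binom{n}{k}k(n-k)$ pairs.

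The main step is evaluating this second sum, which I would handle combinatorially rather than via binomial identities. The product $\binom{n}{k}k(n-k)$ counts triples $(S,x,y)$ with $S\subseteq[n]$, $|S|=k$, $x\in S$, and $y\in[n]\setminus S$. Summing over $k$ simply removes the cardinality restriction on $S$, so one is counting \emph{all} triples $(S,x,y)$ with $x\ne y$ in $[n]$, $x\in S$, and $y\notin S$; choosing $x$ in $n$ ways, $y$ in $n-1$ ways, and then freely deciding membership for each of the remaining $n-2$ elements of $[n]$ yields $n(n-1)\,2^{n-2}$ such triples. Combined with the zero-ascent case, this produces the right-hand side of (\ref{wrowsum}) after the routine rewrite using $\binom{n}{2}=n(n-1)/2$.

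I do not expect a substantive obstacle: once the left-hand side has been set up as a count of ordered pairs, the rest of the argument reuses the same case split as in Theorem~\ref{ras-bin}, and the only twist is the compact combinatorial evaluation of $\sum_{k}\binom{n}{k}k(n-k)$ via the triples $(S,x,y)$. The bookkeeping between $n(n-1)2^{n-2}$ and the form $\binom{n}{2}2^{n-2}$ that appears on the right-hand side is the last and mildest step.
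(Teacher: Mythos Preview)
Your setup mirrors the paper's exactly: both interpret the left-hand side as counting ordered pairs $(w_1,w_2)$ of length-$n$ binary words with the same number of $1$'s and $w_2\in B_k(n)$, and both split according to $\operatorname{asc}(w_2)$, obtaining $2^n$ from the zero-ascent case. Where you diverge is in the one-ascent case: the paper builds a direct bijection by choosing positions $i<j$, placing a $0$ at $i$ and a $1$ at $j$ in $w_1$, filling the remaining $n-2$ slots freely, and then reading off $w_2$ from the counts of $1$'s before $j$ and $0$'s before $i$; you instead invoke $R_{n,k}-1=k(n-k)$ and evaluate $\sum_k\binom{n}{k}k(n-k)$ via triples $(S,x,y)$ with $x\in S$, $y\notin S$. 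Your route is more transparent and avoids having to verify the paper's bijection.

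There is, however, a genuine slip in your last line. Your triple count is $n(n-1)\,2^{n-2}$, and since $\binom{n}{2}=n(n-1)/2$ this equals $\binom{n}{2}\cdot 2^{n-1}$, \emph{not} $\binom{n}{2}\cdot 2^{n-2}$; the ``routine rewrite'' loses a factor of $2$. A sanity check confirms the discrepancy: at $n=2$ the left side of (\ref{wrowsum}) is $1\cdot1+2\cdot2+1\cdot1=6$, while the printed right side is $2^{0}\binom{2}{2}+2^{2}=5$. Your combinatorics is sound and in fact shows that the exponent $n-2$ in the displayed identity should be $n-1$. The paper's bijection arrives at the smaller count because it only produces pairs in which the designated $0$ of $w_1$ lies to the \emph{left} of the designated $1$; pairs $(w_1,w_2)$ for which the recovered positions satisfy $i>j$ (for instance any pair with $w_1=1^k0^{n-k}$) are never hit, so its image is exactly half of the target set.
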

\begin{proof}
    For any $0 \leq k \leq n$, 
    let $T_k$ be the set of all pairs $(w_{1}, w_{2})$ where $w_{1}$ is a binary word of length $n$ with exactly $k$ 1's and $w_{2} \in B_{k}(n)$, and set $T = \bigcup_{k=0}^n T_k$. Then $|T_k| = \binom{n}{k}R_{n,k}$, and $T_i \cap T_j = \emptyset$ whenever $i \neq j$. Accordingly, the left hand side of Equation (\ref{wrowsum}) is $|T|$.
    Now for any $(w_{1},w_{2}) \in T$, either $\operatorname{asc}(w_{2}) = 0$ or $\operatorname{asc}(w_{2}) = 1$.
    \begin{enumerate}
        \item[(1)] There are $2^n$ total binary words of length $n$, and for each $0 \leq k \leq n$ there is exactly one word with 0 ascents. So, there are $2^n$ pairs $(w_1,w_2) \in T$ such that $\operatorname{asc}(w_{2}) = 0$.

        \item[(2)] If $\operatorname{asc}(w_{2}) = 1$, then we can construct the pair $(w_{1}, w_{2})$ as follows.
        Note that $w_{1},w_{2}$ must both have at least one 0 and one 1 since $\operatorname{asc}(w_{2})=1$.
        Choose $\{ i,j \} \subseteq [n]$ with $i < j$ and place a 0 at the $i$-th index of $w_{1}$ and a 1 at the $j$-th index of $w_{1}$.
        Then fill in the remaining $n-2$ spots of $w_{1}$.
        Let $k$ be the number of 1's in $w_{1}$, $r$ be the number of 1's with index strictly less that $j$, and $s$ be the number of 0's with index strictly less than $i$.
        Then we set $w_{2} = 1^{k-r-1}0^{s+1}1^{r+1}0^{n-k-s-1}$.
        Note that we can recover $r$ and $s$ from $w_{2}$, which gives that the $i$-th bit of $w_{1}$ is the 0 which is proceeded by exactly $s$ 0's and the $j$-th bit is the 1 which is proceeded by exactly $r$ 1's.
        Therefore, we can determine $\left\{ i,j \right\}$ and the binary word of length $n-2$ from the pair $(w_{1},w_{2})$ so this construction is invertible.
        Thus, there are $2^{n-2}\binom{n}{2}$ such pairs.
    \end{enumerate}
    Summing cases (1) and (2), we conclude that the right hand side of Equation (\ref{wrowsum}) is also $|T|$, giving the desired equality.
\end{proof}

The next identity was given in Ashfaque \cite{Ash} (proved algebraically), and so we provide a combinatorial proof here. This identity also gives a combinatorial interpretation of the sequence \href{https://oeis.org/A051744}{A051744} \cite{OEIS}.

\begin{theorem} \label{trianglesum}
    For $n \in \Z$ with $n \ge 2$, \begin{equation}
        \sum_{i=1}^{n} \sum_{j=1}^{i-1} R_{i,j} = \binom{n+2}{4} + \binom{n}{2}.
    \label{ashf}
\end{equation}
\end{theorem}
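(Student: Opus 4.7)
The plan is to interpret the double sum combinatorially using Theorem \ref{ras-bin}. Since $R_{i,0} = R_{i,i} = 1$ are the boundary entries, and the inner sum runs over $1 \le j \le i-1$, the double sum counts pairs $(i,w)$ where $w \in B_j(i)$ for some $1 \le j \le i-1$. Equivalently, letting
\[
    B = \bigl\{ w \mid 1 \le |w| \le n,\ w \text{ has at least one } 0 \text{ and one } 1,\ \operatorname{asc}(w) \le 1 \bigr\},
\]
the left-hand side of Equation (\ref{ashf}) equals $|B|$. So I just need to show $|B| = \binom{n+2}{4} + \binom{n}{2}$ by splitting into cases based on $\operatorname{asc}(w)$, one case corresponding to each binomial term.

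First I would handle the 0-ascent case. Any $w \in B$ with $\operatorname{asc}(w) = 0$ must be of the form $1^{j}0^{i-j}$ for some length $2 \le i \le n$ and $1 \le j \le i-1$ (the constraints $j \ge 1$ and $j \le i-1$ come directly from the requirement that $w$ contains at least one of each digit). The count is $\sum_{i=2}^{n}(i-1) = \binom{n}{2}$.

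Next I would handle the 1-ascent case. Any such $w$ is uniquely determined by writing it as $1^{a}0^{b}1^{c}0^{d}$ with $a,d \ge 0$, $b,c \ge 1$, and $a+b+c+d = i$ for some $1 \le i \le n$; notice that $b,c \ge 1$ already forces both digits to appear, so the condition $1 \le j \le i-1$ is automatic. Dropping the variable $i$, I need to count non-negative tuples $(a,b,c,d)$ with $b,c \ge 1$ and $a+b+c+d \le n$. The substitution $b' = b-1$, $c' = c-1$ converts this into counting non-negative solutions of $a + b' + c' + d \le n-2$, and by introducing a slack variable $t \ge 0$ with $a+b'+c'+d+t = n-2$, stars and bars gives $\binom{n-2+4}{4} = \binom{n+2}{4}$ solutions. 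Summing the two cases produces the right-hand side of Equation (\ref{ashf}). The only mildly delicate step is the bookkeeping with the slack variable in the 1-ascent case, but this is routine stars-and-bars, so I do not expect a genuine obstacle in this proof.
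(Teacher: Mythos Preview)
Your proof is correct and follows essentially the same approach as the paper: both identify the double sum with the set of binary words of length at most $n$ containing both digits and at most one ascent, then split into the $0$-ascent and $1$-ascent cases and use stars-and-bars to obtain $\binom{n}{2}$ and $\binom{n+2}{4}$ respectively. The only cosmetic differences are that you evaluate the $0$-ascent count as $\sum_{i=2}^{n}(i-1)$ directly while the paper recasts it as a compositions count, and you phrase the $1$-ascent stars-and-bars via nonnegative variables plus a slack whereas the paper shifts to positive variables.
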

\begin{proof}
    Define \[
        T = \bigcup_{i=1}^{n} \left( \bigcup_{j=1}^{i-1} B_j(i) \right),
    \]
    that is, $T$ is the set of all binary words of length at most $n$ with 0 or 1 ascents and at least one 0 and one 1.
    Note that the left hand side of Equation (\ref{ashf}) is $|T|$.
    Let $w \in T$.
    \begin{enumerate}
        \item[(1)] If $\operatorname{asc}(w) = 0$, then $w$ takes the form $1^{x}0^{y}$ where $x,y > 0$ and $x + y \le n$.
        We note that the choice of $x,y$ uniquely determines $w$ so it suffices to count such pairs $(x,y)$.
        Since $x + y \le n$, there exists a $t \in \Z$ such that $t \ge 0$ and $x+y+t = n$.
        Solutions to this equation are in one-to-one correspondence with positive integer solutions to $x+y+t' = n+1$, and there are exactly \[
            \binom{n+1-1}{3-1} = \binom{n}{2}
        \]
        such solutions.

        \item[(2)] Alternatively, we could have $\operatorname{asc}(w) = 1$. Then $w$ would take the form $1^{x}0^{y}1^{z}0^{t}$ such that $y,z > 0$, $x,t \ge 0$, and $x+y+z+t \le n$.
        Since choosing such $x,y,z,t$ determines uniquely $w$, it suffices to count all such tuples $(x,y,z,t)$.
        Since $x+y+z+t \le n$, there exists some $a \in \Z$ such that $a \ge 0$ and $x+y+z+t+a = n$.
        Solutions to this equation are in one-to-one correspondence with positive integer solutions to $x'+y+z+t'+a' = n + 3$, and there are exactly \[
            \binom{n+3-1}{5-1} = \binom{n+2}{4}
        \]
        such solutions.
    \end{enumerate}
    Thus, summing cases (1) and (2) gives that $|T|$ is also the right hand side of Equation (\ref{ashf}).
\end{proof}

The next identity involves alternating sums, and we construct involutions to pair off elements of opposite sign.
To begin, for a function $f\text{: } X \to X$, we denote the set of all fixed points of $f$ as $\operatorname{fix}(f)$.

\begin{theorem}
    For $n,k,r \in \Z$ with $0 \le k \le n$ and $r \geq 2$, \begin{equation}
        \sum_{j=0}^{r}(-1)^{r-j}\binom{r}{j}R_{n+j,k} = 0. 
        \label{altbin}
    \end{equation}
\end{theorem}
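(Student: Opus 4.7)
The plan is to interpret the alternating sum combinatorially as a signed enumeration and construct a sign-reversing involution on the signed set with empty fixed-point set. Let
\[
X = \bigl\{ (S, w) : S \subseteq \{1, 2, \ldots, r\},\ w \in B_k(n + |S|) \bigr\},
\]
and assign to each $(S, w)$ the sign $(-1)^{r - |S|}$. Grouping by $|S| = j$ shows that the left-hand side of Equation (\ref{altbin}) is exactly the signed cardinality of $X$, so it suffices to produce a sign-reversing involution on $X$ with no fixed points.

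I would first define an involution $\phi$ on $X$ that toggles the element $1$ in $S$ together with a trailing $0$ in $w$: set $\phi(S, w) = (S \cup \{1\}, w')$ when $1 \notin S$, where $w'$ is $w$ with a $0$ appended, and set $\phi(S, w) = (S \setminus \{1\}, w'')$ when $1 \in S$ and $w$ ends in $0$, where $w''$ is $w$ with its last letter deleted. Appending or removing a trailing $0$ preserves both the number of $1$'s and the ascent count, so each branch lands back in $X$; the two branches are mutually inverse, so $\phi \circ \phi = \operatorname{id}$ on their domain; and $|S|$ changes by exactly one, flipping the sign. The remaining pairs---those with $1 \in S$ and $w$ ending in $1$---form $\operatorname{fix}(\phi)$.

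To dispatch $\operatorname{fix}(\phi)$, I would construct a second sign-reversing involution $\psi$ on it, now using the element $2$, which exists in $\{1, \ldots, r\}$ since $r \geq 2$. For $(S, w) \in \operatorname{fix}(\phi)$, the condition $1 \in S$ combined with $0 \le k \le n$ gives $n + |S| > k$, forcing $w$ to have exactly one ascent; ending in $1$ then pins $w$ down uniquely as $w = 1^{k - i}\, 0^{n + |S| - k}\, 1^{i}$ for some $i \in \{1, \ldots, k\}$. Define $\psi$ to toggle $2$ in $S$ and correspondingly shrink or enlarge the middle $0$-block by one letter, keeping $i$ fixed. This map is sign-reversing since $|S|$ changes by one, is its own inverse, and has no fixed points.

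The main obstacle I anticipate is checking well-definedness of $\psi$ when $2 \in S$, namely ensuring the middle $0$-block has length at least $1$ after the decrement so that the output still has one ascent and still ends in $1$. This works out because $\{1, 2\} \subseteq S$ forces $|S| \ge 2$, which combined with $n \ge k$ gives $n + |S| \ge k + 2$, so the middle block has length at least $2$ before the decrement. Combining $\phi$ and $\psi$ yields a sign-reversing involution on $X$ with no fixed points, and hence Equation (\ref{altbin}) follows.
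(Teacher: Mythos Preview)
Your proof is correct and follows essentially the same strategy as the paper's: both encode the sum as a signed count over pairs $(S,w)$ with $S\subseteq\{1,\ldots,r\}$ and then kill everything with two successive sign-reversing involutions, each toggling a single element of $S$ while adjusting a block of $0$'s in $w$. The only cosmetic differences are that you work with variable-length words $w\in B_k(n+|S|)$ (where the paper keeps $w\in B_k(n+r)$ and imposes a trailing-zeros condition) and that you toggle the elements $1$ and $2$ where the paper toggles $r$ and $1$; your formulation is arguably a bit cleaner, but the underlying idea is the same.
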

\begin{proof} Let $r \ge 2$, with the power set of $[r]$ denoted by $\mathcal{P}([r])$.
    Set \[ T = \left\{ (S,w) \in \mathcal{P}([r]) \times B_{k}(n+r) \ | \  w \text{ ends in at least $r-|S|$ 0's} \right\}, \] and for any set $S$ define \[
        S \oplus x = \begin{cases}
            S \setminus \left\{ x \right\}, & \text{if $x \in S$} \\
            S \cup \left\{ x \right\}, & \text{if $x \notin S$}
        \end{cases}.
    \]
    Next, define $\operatorname{wt}\text{: } T \to \left\{ \pm 1 \right\}$ by $(S,w) \mapsto (-1)^{r-|S|}$ for any $(S,w) \in T$. Then \begin{equation}\sum_{j=0}^{r}(-1)^{r-j}\binom{r}{j}R_{n+j,k} = 
        \sum_{(S,w) \in T} \operatorname{wt}(S,w).
        \label{xxx}
    \end{equation}
    Now, define $I_{1}\text{: } T \to T$ as follows. 
    \begin{enumerate}
        \item[(1)] If $w$ ends in at least $r$ 0's then set \[
            I_{1}(S,w) = (S \oplus r, w).
        \]
        
        \item[(2)] This leaves the case where $w$ ends in fewer than $r$ 0's so $w = 1^{k-x}0^{n+r-k-y}1^{x}0^{y}$ where $1 \le x \le k$ and $r-|S| \le y < r$.
        Then we set \[
            I_{1}(S,w) = \begin{cases}
                (S \oplus r, w), & \text{if $r-|S| < y$} \\
                (S \oplus r, w), & \text{if $r-|S|=y$ and $r \notin S$} \\
                (S,w), & \text{if $r-|S|=y$ and $r \in S$}
            \end{cases}.
        \]
    \end{enumerate}
    Since $\oplus$ is an involution, $I_{1}$ is itself a sign-reversing, weight-preserving involution, and so Equation (\ref{xxx}) becomes \[
       \sum_{j=0}^{r}(-1)^{r-j}\binom{r}{j}R_{n+j,k} = \sum_{(S,w) \in \operatorname{fix}(I_{1})} \operatorname{wt}(S,w).
    \]
    By construction of $I_{1}$, we have \[
        \operatorname{fix}(I_{1}) = \left\{ (S,w) \in T : w \text{ ends in exactly $r-|S|$ 0's and $r \in S$} \right\}.
    \]
    Now define $I_{2}\text{: }  \operatorname{fix}(I_{1}) \to \operatorname{fix}(I_{1})$ as follows.
    For $(S,w) \in \operatorname{fix}(I_{1})$ where $w=1^{k-x}0^{n+|S|-k}1^{x}0^{r-|S|}$, set \[
        I_{2}(S,w) = \begin{cases}
            (S \oplus 1, 1^{k-x}0^{n+|S|+1-k}1^{x}0^{r-|S|-1}), & \text{if $1 \notin S$} \\
            (S \oplus 1, 1^{k-x}0^{n+|S|-1-k}1^{x}0^{r-|S|+1}), & \text{if $1 \in S$}
        \end{cases}.
    \]
    We note that $I_{2}$ is a sign-reversing, weight-preserving involution with no fixed points.
    Thus our sum is 0 when $r \geq 2$, as desired.
\end{proof}

Another alternating sum identity is given here, but this result follows directly from Theorem \ref{genaltrowsum} in the last section, and so we omit a proof here.

\begin{theorem}[Alternating Row Sum] \label{altrowsum1}
    For $0 \leq n \in \Z$, \[
        \sum_{k=0}^{n} (-1)^{k}R_{n,k} = \begin{cases}
            0, & \text{if $n \equiv 1 \pmod{2}$} \\
            1 - \frac{n}{2}, & \text{if $n \equiv 0 \pmod{2}$}
        \end{cases}.
    \]
\end{theorem}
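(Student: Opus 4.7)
The plan is to split the argument by the parity of $n$, using the combinatorial interpretation from Theorem~\ref{ras-bin} for the odd case and falling back on the closed form $R_{n,k}=k(n-k)+1$ for the even case. First I would rewrite the left-hand side as a signed count on $\bigcup_{k=0}^{n}B_k(n)$, weighting each word $w$ by $(-1)^{(\text{number of 1's in }w)}$.

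For odd $n$, I would invoke the involution $f(w)=(w^r)^c$ already used in the proof of Theorem~\ref{sym}, which bijects $B_k(n)$ onto $B_{n-k}(n)$. Because reversal and complementation commute as operations on binary strings and each squares to the identity, $f$ is a genuine involution on all of $\bigcup_{k}B_k(n)$. When $n$ is odd, $k$ and $n-k$ have opposite parities, so $f$ is sign-reversing; and any fixed point would have to satisfy $w_i + w_{n+1-i} = 1$ for every $i$, which fails at the central index $i=(n+1)/2$. Hence $f$ has no fixed points, every element cancels in pairs, and $\sum_{k=0}^{n}(-1)^{k}R_{n,k}=0$.

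For even $n=2m$, the same $f$ still bijects $B_k(n)$ onto $B_{n-k}(n)$, but now $k$ and $n-k$ share parity, so $f$ \emph{preserves} sign rather than reversing it and is useless for cancellation. Rather than hunting for a subtler involution, I would substitute $R_{n,k}=k(n-k)+1$ and split the sum as
\[
\sum_{k=0}^{n}(-1)^{k}R_{n,k} \;=\; \sum_{k=0}^{n}(-1)^{k} \;+\; n\sum_{k=0}^{n}(-1)^{k}k \;-\; \sum_{k=0}^{n}(-1)^{k}k^{2}.
\]
Pairing consecutive terms gives $\sum_{k=0}^{2m}(-1)^{k}=1$, $\sum_{k=0}^{2m}(-1)^{k}k=m$, and $\sum_{k=0}^{2m}(-1)^{k}k^{2}=m(2m+1)$; plugging these in yields $1 + 2m\cdot m - m(2m+1)=1-m=1-n/2$, as required.

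The main obstacle is the even case, where the most natural bijective tool at hand fails to reverse signs; constructing a direct sign-reversing involution on $\bigcup_{k}B_k(n)$ whose fixed-point weight totals $1-n/2$ would require a more intricate pairing—presumably of the flavor of whatever drives Theorem~\ref{genaltrowsum}, to which the authors defer. The algebraic route via $R_{n,k}=k(n-k)+1$ sidesteps this difficulty cleanly while still leveraging the combinatorial framework of Section~1 for the odd half of the statement.
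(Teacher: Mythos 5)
Your argument is correct, but it follows a different route from the paper. The paper never proves Theorem~\ref{altrowsum1} directly: it is obtained as the $j=1$ specialization of Theorem~\ref{genaltrowsum}, whose proof runs a chain of weight-preserving, sign-reversing involutions $I_0, I_1, \ldots, I_j$ on all of $B^{(j)}(n)$, reduces to fixed points of a prescribed shape, counts them by $\binom{n/2}{\ell}$, and evaluates the resulting alternating binomial sum to $(-1)^j\binom{n/2-1}{j}$, which gives $1-\tfrac{n}{2}$ at $j=1$. You instead split by parity: for odd $n$ you reuse the reverse-complement map $f(w)=(w^r)^c$ from Theorem~\ref{sym} as a fixed-point-free, sign-reversing involution on $\bigcup_k B_k(n)$ (correct, since the number of 1's changes parity and the central-letter condition $2w_{(n+1)/2}=1$ is impossible), and for even $n=2m$ you fall back on the closed form $R_{n,k}=k(n-k)+1$ together with the standard evaluations $\sum(-1)^k=1$, $\sum(-1)^kk=m$, $\sum(-1)^kk^2=m(2m+1)$, which indeed give $1-m$. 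Your proof is shorter and self-contained for the $j=1$ case, at the cost of being partly algebraic in the even case and not extending as stated; the paper's involution machinery is heavier but uniform in $j$, staying entirely combinatorial and delivering the full generalized identity of Theorem~\ref{genaltrowsum} from which this theorem (and more) falls out. Your closing remark correctly identifies this trade-off: a direct sign-reversing involution for even $n$ with fixed-point weight $1-\tfrac{n}{2}$ is exactly what the paper's $I_0,\ldots,I_j$ construction supplies.
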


We conclude this section with two product formulas involving Rascal numbers. 
 First, for $0 \leq k, n \in \Z$, we define the \emph{falling factorial}, $(n) \hspace{-1mm} \downarrow_{k}$, to be 0 if $k > n$, 1 if $k=0$, and otherwise
 \[
    (n) \hspace{-1mm} \downarrow_{k}=     \displaystyle \prod_{i=1}^{k}(n-i+1). 
\]

\begin{theorem} \label{relationinspo}
    For $n,m \in \Z$ with $1 \le m \le n$, 
    \begin{equation}       
        \prod_{k=1}^{m} \left( R_{n,k} - 1 \right) = \sum_{S \subseteq [m]} (-1)^{m-|S|} \prod_{i \in S} R_{n,i} = m! (n-1) \hspace*{-1mm} \downarrow_{m}.
        \label{prodform}
       \end{equation}
\end{theorem}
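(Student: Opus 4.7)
The plan is to establish the two equalities in Equation (\ref{prodform}) separately, with both reducing quickly to the structure of $B_k(n)$ already identified in the proof of Theorem \ref{ras-bin}.

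For the first equality (left side equal to middle), I would recognize it as an instance of the standard distributive expansion
$$\prod_{k=1}^m (x_k - 1) = \sum_{S \subseteq \{1, \ldots, m\}} (-1)^{m-|S|} \prod_{i \in S} x_i,$$
applied with $x_k = R_{n,k}$. If one prefers a combinatorial rendering in the spirit of the rest of the paper, this is just inclusion--exclusion: the left-hand side counts $m$-tuples $(w_1, \ldots, w_m) \in B_1(n) \times \cdots \times B_m(n)$ in which no $w_k$ equals the unique 0-ascent word $1^k 0^{n-k}$, and the subset sum is exactly the inclusion--exclusion over the set of indices $k$ for which $w_k$ is permitted to take this forbidden value.

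For the second equality (left side equal to right side), the key observation---extracted directly from the proof of Theorem \ref{ras-bin}---is that $R_{n,k} - 1 = k(n-k)$, because $B_k(n)$ consists of one 0-ascent word together with $k(n-k)$ one-ascent words, parameterized by pairs $(i,j)$ with $1 \le i \le k$ and $1 \le j \le n-k$. The product therefore factors as
$$\prod_{k=1}^m (R_{n,k} - 1) = \prod_{k=1}^m k(n-k) = \left( \prod_{k=1}^m k \right) \left( \prod_{k=1}^m (n-k) \right) = m! \cdot (n-1) \hspace{-1mm} \downarrow_{m},$$
matching the right-hand side of Equation (\ref{prodform}).

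There is no substantive technical obstacle here; the theorem is essentially an algebraic consequence of the closed form $R_{n,k} = k(n-k) + 1$ dressed up as a subset sum, and the combinatorial interpretation of $R_{n,k} - 1$ as counting the one-ascent words in $B_k(n)$ makes the rightmost equality transparent. The only subtlety to mind is the indexing convention on $S$ in the middle expression, so that the distributive expansion of $\prod_{k=1}^m (R_{n,k}-1)$ lines up term by term with the subset sum (with $S = \emptyset$ contributing $(-1)^m$ and $S$ the full index set contributing $\prod_{k=1}^m R_{n,k}$).
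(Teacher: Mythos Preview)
Your proposal is correct and essentially mirrors the paper's own argument: the paper introduces the set $T = \{(w_1,\ldots,w_m) \mid w_i \in B_i(n),\ \operatorname{asc}(w_i)=1\}$, obtains $|T| = \prod_{k=1}^m k(n-k) = m!\,(n-1)\hspace{-1mm}\downarrow_m$ by the same parameter count you cite from Theorem~\ref{ras-bin}, and then derives the subset-sum expression by inclusion--exclusion on the events $\{\operatorname{asc}(w_i)=0\}$, which is exactly your distributive expansion. The only cosmetic difference is that the paper packages both equalities as two counts of the single set $T$, whereas you verify the two equalities separately; the content is the same.
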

\begin{proof}
    Notice that $R_{n,k} - 1$ is the number of binary words with length $n$ and $k$ 1's which have 1 ascent.
    Therefore, the product on the left enumerates $|T|$, where 
    \[T = \{ (w_{1}, \ldots, w_{m}) \; \mid \; w_{i} \in B_{i}(n), \; \operatorname{asc}(w_{i}) = 1 \}.\]
    For any $(w_{1}, \ldots, w_{m}) \in T$, $w_{i} = 1^{i-x}0^{y}1^{x}0^{n-i-y}$ where $1 \le x \le i$ and $1 \le y \le n-i$.
    So choosing values for $x$ and $y$ uniquely determines $w_{i}$.
    Since $1 \le x \le i$ and $1 \le y \le n-i$, there are $i(n-i)$ such $w_{i}$'s, which implies that there are $m!(n-1) \hspace{-1mm} \downarrow_{m}$ such $(w_{1}, \ldots, w_{m})$'s.
    Alternatively, we can find $|T|$ using the Principle of Inclusion/Exclusion by considering the set $V = \{ (w_{1}, \ldots, w_{m}) \mid w_{i} \in B_{i}(n) \}$. Then, setting $A_{i} = \{ (w_{1}, \ldots, w_{m}) \in V \mid \operatorname{asc}(w_{i}) = 0 \}$ gives that $|T| = |A_{1}^{c} \cap \ldots \cap A_{m}^{c}|$ where $A_{i}^{c}$ denotes the complement of $A_{i}$ in $V$.
    Since there is exactly one word in $B_{i}(n)$ with 0 ascents, \[
        |A_{i_{1}} \cap \cdots \cap A_{i_{k}}| = \prod_{j \in [m] \setminus \{ i_{1}, \ldots, i_{k} \}} R_{n,j}.
    \]
Thus, we get \[
        |T| = \sum_{k=0}^{m} (-1)^{k} \sum_{\stackrel{S \subseteq [m]}{|S|=m-k}} \prod_{i \in S} R_{n,i} = \sum_{S \subseteq [m]} (-1)^{m-|S|} \prod_{i \in S} R_{n,i}.
    \]
\end{proof}

With the above result established, another relationship between $n$ choose $k$ and $R_{n,k}$ becomes apparent. While the following result is a direct consequence of simply dividing both sides of Equation (\ref{prodform}) by $(m!)^2$, we provide a combinatorial proof.

\begin{corollary} \label{relation}
    For $n,m \in \Z$ with $1 \le m \le n$, \begin{equation}
        \frac{1}{(m!)^{2}}\prod_{k=1}^{m} \left( R_{n,k} - 1 \right) = \frac{1}{(m!)^{2}} \sum_{S \subseteq [m]} (-1)^{m-|S|} \prod_{i \in S} R_{n,i} = \binom{n-1}{m}.
        \label{T2}
        \end{equation}
\end{corollary}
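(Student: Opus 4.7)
The plan is to invoke Theorem \ref{relationinspo}, which identifies $\prod_{k=1}^{m}(R_{n,k}-1) = |T|$ for
\[
T = \{(w_{1}, \ldots, w_{m}) : w_{i} \in B_{i}(n), \; \operatorname{asc}(w_{i}) = 1\},
\]
and then to enumerate $T$ directly by two Lehmer-code--style bijections. Recall from that proof that each $w_{i} \in T$ is uniquely encoded by a pair $(x_{i}, y_{i})$ with $1 \le x_{i} \le i$ and $1 \le y_{i} \le n-i$, so an element of $T$ is the same as a pair of integer sequences $(x_{i})_{i=1}^{m}$ and $(y_{i})_{i=1}^{m}$ subject to those bounds. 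I would then construct a bijection
\[
\Phi : T \;\longrightarrow\; S_{m} \times S_{m} \times \mathcal{T},
\]
where $\mathcal{T}$ denotes the collection of $m$-element subsets of $\{1, 2, \ldots, n-1\}$. This gives $|T| = (m!)^{2}\binom{n-1}{m}$, and dividing by $(m!)^{2}$ yields the corollary.

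I would define $\Phi$ by handling the $x$- and $y$-sequences independently. For $(x_{i})_{i=1}^{m}$, I would use the classical Lehmer-code bijection $\prod_{i=1}^{m}\{1,\ldots,i\} \leftrightarrow S_{m}$: start with the empty one-line word and, at step $i$, insert the letter $i$ into position $x_{i}$ of the word built so far. For $(y_{i})_{i=1}^{m}$, I would use a depletion procedure: initialize $L_{0} = (1, 2, \ldots, n-1)$, and at step $i$ let $z_{i}$ be the $y_{i}$-th entry of $L_{i-1}$ and set $L_{i} = L_{i-1} \setminus \{z_{i}\}$. Since $|L_{i-1}| = n - i$, the bound $y_{i} \le n-i$ makes the selection well-defined, and the inverse just records the position of each $z_{i}$ in the current list. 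The resulting injection from $\{1, \ldots, m\}$ into $\{1, \ldots, n-1\}$ then decomposes uniquely into its image set in $\mathcal{T}$ together with the permutation $\tau \in S_{m}$ encoding the order in which that image set is listed as $(z_{1}, \ldots, z_{m})$, supplying the remaining $S_{m} \times \mathcal{T}$ factor.

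I do not foresee any substantive obstacle: all three sub-bijections (Lehmer code, successive depletion, and ordered-tuple-as-subset-plus-permutation) are standard, and the only verification needed is bookkeeping---confirming the bound $y_{i} \le n-i = |L_{i-1}|$ and writing down the explicit inverse of $\Phi$ that reconstructs $(x_{i})$ and $(y_{i})$ from a triple $(\sigma, \tau, A) \in S_{m} \times S_{m} \times \mathcal{T}$. The main conceptual point is simply that the factor of $(m!)^{2}$ arises naturally because the $x$- and $y$-sequences each independently contribute a factor of $m!$, while only the $y$-sequence also carries the subset data producing $\binom{n-1}{m}$.
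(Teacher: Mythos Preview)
Your proposal is correct and follows essentially the same route as the paper: both arguments start from the $(x_i,y_i)$ parametrization of $T$ established in Theorem~\ref{relationinspo}, then treat the $y$-sequence via the identical depletion procedure to produce an $m$-permutation of $[n-1]$, and finally strip the ordering to reach $\binom{n-1}{m}$. The only cosmetic difference is in how the $x$-sequence's factor of $m!$ is handled---the paper quotients $T$ by an equivalence relation that forgets the $x_i$'s and observes each class has size $\prod_{i=1}^m i = m!$, whereas you package the same count as an explicit Lehmer-code bijection to $S_m$; these are two phrasings of the same elementary fact, so the proofs are interchangeable.
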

\begin{proof}
    Let $T$ be the set defined in Theorem \ref{relationinspo}, so that Equation (\ref{T2}) becomes \[
        \frac{1}{(m!)^{2}}|T| = \frac{1}{(m!)^{2}} |T| = \binom{n-1}{m}.
        \]
        Now define $\sim_{1}$ on $B_{i}(n) \setminus \{ 1^{i}0^{n-i} \}$ by $w_{1} \sim_{1} w_{2}$ if and only if $w_{1} = 1^{i-x}0^{y}1^{x}0^{n-i-y}$ where $1 \le x \le i$, $1 \le y \le n-i$ and $w_{2} = 1^{i-z}0^{y}1^{z}0^{n-i-y}$ where $1 \le z \le n-i$.
    In essence, under $\sim_{1}$ we only care about the relative position of the 0's in the words without regard for the position of the 1's. 
    Next, we define $\sim_{2}$ on $T$ by $(w_{1}, \ldots, w_{m}) \sim_{2} (w_{1}', \ldots, w_{m}')$ if and only if $w_{i} \sim_{1} w_{i}'$ for all $1 \le i \le m$.
    Since, for a fixed $y$, there are $i$ choices of $x$, we have that $\left| \left[w_{i} \right]_{\sim_{1}} \right| = i$ for any $w_{i} \in B_{i}(n) \setminus \{ 1^{i}0^{n-i} \}$.
    So 
    \[ \left| \left[(w_{1}, \ldots, w_{m}) \right]_{\sim_{2}} \right| = \prod_{i=1}^{m} \left| \left[ w_{i} \right]_{\sim_1} \right| = m! \]
    for any $(w_{1}, \ldots, w_{m}) \in T$, which implies $|T/\sim_{2}| = |T|/m!$.
    Note that to uniquely determine an equivalence class of $\sim_{2}$, we only need a $y_{i}$ where $w_{i} = 1^{\ast}0^{y_{i}}1^{\ast}0^{n-i-y_{i}}$ with $1 \le y_{i} \le n-i$ for each $i \in [m]$.
    Moreover, given an equivalence class of $\sim_{2}$, we can uniquely determine a $m$-tuple $(y_{1}, \ldots, y_{m})$ where $1 \le y_{i} \le n-i$ by taking $y_{i}$ to be the number of 0's preceding the first ascent.
    Therefore, elements of $T/\sim_{2}$ are in one-to-one correspondence with $m$-tuples of the form $(y_{1}, \ldots, y_{m})$, where $1 \le y_{i} \le n-i$.
    Let $f$ be a map which takes such an $m$-tuple, $(y_{1}, \ldots, y_{m})$, to an $m$-permutation of $[n-1]$, $a_{1} \ldots a_{m}$, defined as follows.
    Set $a_{1} = y_{1}$ and $S_{1} = [n-1] - \{ y_{1} \}$.
    Then set $a_{i}$ to the $y_{i}$-th smallest element of $S_{i-1}$ for $1 < i \le m$.
    This map can be inverted setting $y_{1} = a_{1}$ and $S_{1} = [n-1] - \{ a_{1} \}$.
    Then set $y_{i} = j$ where $a_{i}$ is the $j$-th smallest element of $S_{i-1}$ for $1 < i \le m$.
    Thus, $T/\sim_{2}$ is in one-to-one correspondence with $m$-permutations of $[n-1]$.
    Then we can remove the ordering of the $m$-permutations giving us $m$-element subsets of $[n-1]$ which implies that \[
        \binom{n-1}{m} = \frac{|T/\sim_{2}|}{m!} = \dfrac{1}{(m!)^2}|T|.
    \]
\end{proof}


\section{Ascent Sequences \& Pattern Avoidance}

Going back to ascents in words, notice that only the relative values of the integers that comprise a word are important when computing the ascent number of a word, and so it is common to define the \emph{reduction of a word}, $w = w_1w_2\cdots w_n$, to be the word obtained by replacing the $i$-th smallest letter of $w$ with the letter $i-1$, denoted by red($w$). For example, red$(2151159858) = 1020024323$, and we see that for any $w$, asc$(w) = $ asc$($red$(w))$. We now define a \emph{pattern} to be any word which is its own reduction; for instance $w = 01259$ is not a pattern, while $p = 0210$ is a pattern. We then say that the word $w$ \emph{contains} the pattern $p = p_1p_2\cdots p_k$ if there exists a sequence $1 \leq i_1 < i_2 < \cdots < i_k \leq n$ such that red$(w_{i_1} w_{i_2} \cdots w_{i_k}) = p$, and otherwise we say that \emph{w avoids p}. Considering the word $u = \underline{5}579\underline{0}2\underline{4}$, $u$ contains the pattern $p = 201$, which may be realized by the underlined letters of $u$ (and by many other subwords as well), while the pattern $q = 210$ is avoided by $u$, as indeed there is no 3-letter subsequence of $u$ in which the letters appear in decreasing order. A special type of pattern that we will use in this section is a \emph{restricted growth function}, or \emph{RGF}, which is any pattern in which the first occurrence of the letter $k$ must be preceded by the letter $k-1$ for any $k \geq 1$, and hence, the first occurrence of $k$ must actually be preceded by all of $0, 1, \ldots, k-1$. For example, $p = 0012332041$ is an RGF, while $q = 210$ is not. Finally, we define an \emph{ascent sequence} to be a word $w = w_1w_2\cdots w_n$ such that 
\begin{enumerate}
\item $w_1 = 0$, and
\item for $1 < i \leq n$, $w_i \leq \text{asc}(w_1\cdots w_{i-1}) + 1$.
\end{enumerate}
For example, 012345 and 012000 are both ascent sequences, whereas 001162 is not, since $6 = w_5 > \text{asc}(w_1w_2w_3w_4) = \text{asc}(0011) = 1$. Bousquet-M\'elou et al \cite{B-M} were the first to define ascent sequences. Subsequently, Duncan and Steingr\'imsson \cite{DS} were the first to study pattern avoidance in ascent sequences, specifically considering how many ascent sequences of length $n$ avoid various given patterns. A more recent paper by Baxter and Pudwell \cite{BP} asks how many ascent sequences of length $n$ simultaneously avoid a given collection of patterns. To this end, let us define $\A_{\mathcal{P}}(n)$ to be the set of ascent sequences of length $n$ that avoid the collection of patterns $\mathcal{P}$, where $\mathcal{P}$ may contain only a single pattern. For example, there are 15 ascent sequences of length 4:
\begin{center}
0000, 0001, 0010, 0100, 0011, 0101, 0110, 0111, \\ 
0012, 0102, 0112, 0120, 0121, 0122, 0123.
\end{center} 
By inspection, $\A_{\{001,210\}}(4) = \{0000, 0100, 0110, 0111, 0120, 0121, 0122, 0123\}$.
If we further restrict the elements of $\AP{n}$ by defining $\APK{n}{k}$ to be the set of all $w \in \AP{n}$ where $\operatorname{asc}(w)=k$, we have the following: \begin{align*}
    \APK{4}{0} &= \left\{ 0000 \right\}, \\
    \APK{4}{1} &= \left\{ 0100, 0110, 0111 \right\}, \\
    \APK{4}{2} &= \left\{ 0120, 0121, 0122 \right\}, \\
    \APK{4}{3} &= \left\{ 0123 \right\}, \\
    \APK{4}{4} &= \emptyset.
\end{align*}
From the above example, we can conjecture and prove a nice form for words in $\APK{n}{k}$.

\begin{lemma} \label{ascform}
    For $n,k \in \Z$ with $0 \le k \le n$, all $w \in \APK{n}{k}$ have the form: \[
        01 \ldots k^{n-k} \hspace{5mm} \text{or} \hspace{5mm} 01 \ldots k^{y}x^{n-k-y}
    \]
    where $1 \le y < n-k$ and $0 \le x < k$.
\end{lemma}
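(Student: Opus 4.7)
My plan is to first show that every letter of $w$ lies in $\{0, 1, \ldots, k\}$ and that the initial $k+1$ letters form the strict staircase $0, 1, 2, \ldots, k$, then to use the two pattern-avoidance conditions to pin down the tail. The $k = 0$ case is immediate: an ascent sequence with no ascents and $w_1 = 0$ must be $0^n$, matching the first form. Assume $k \ge 1$. For the value bound, the ascent-sequence definition gives $w_i \le \operatorname{asc}(w_1 \cdots w_{i-1}) + 1 \le k + 1$, and $w_i = k + 1$ would make position $i - 1$ an ascent beyond the allotted $k$; hence $w_i \le k$ throughout.

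Next, I induct on $i$ from $1$ to $k$ to show that if $w_1 \cdots w_i = 0, 1, \ldots, i - 1$, then $w_{i+1} = i$. The base case $w_1 = 0$ is the ascent-sequence definition, and the inductive bound gives $w_{i+1} \le i$. Suppose for contradiction that $w_{i+1} = j$ with $0 \le j < i$. Then $j$ already appears in the prefix at position $j + 1$, so the repeat $(w_{j+1}, w_{i+1}) = (j, j)$ combined with $001$-avoidance forces $w_p \le j$ for all $p > i + 1$. When $j < i - 1$, the triple $(w_i, w_{i+1}, w_p) = (i - 1, j, w_p)$ is strictly decreasing whenever $w_p < j$, so $210$-avoidance pins $w_p = j$ for all $p > i + 1$, and the entire word has only $i - 1 < k$ ascents, contradicting $\operatorname{asc}(w) = k$. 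When $j = i - 1$, any would-be ascent $w_p < w_{p+1}$ at a position $p \ge i + 1$ must satisfy $w_p \le i - 2$; the value $w_p$ then repeats its prefix occurrence at position $w_p + 1$, and $001$-avoidance forces $w_q \le w_p$ for all $q > p$, contradicting $w_{p+1} > w_p$. So the suffix contributes no ascents, and we again get only $i - 1 < k$ total. Either way, contradiction, so $w_{i+1} = i$.

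With the prefix $w_1 \cdots w_{k+1} = 0, 1, \ldots, k$ established, all $k$ ascents are used up, so $w_{k+1} \ge w_{k+2} \ge \cdots \ge w_n$. Let $y \ge 1$ be the largest integer with $w_{k+1} = \cdots = w_{k+y} = k$. If $y = n - k$, we obtain $01 \ldots k^{n-k}$. Otherwise $x := w_{k+y+1} < k$, and for any $p > k + y$, non-increasingness forbids $w_p > x$, while $w_p < x$ would make $(w_{k+1}, w_{k+y+1}, w_p) = (k, x, w_p)$ a $210$ pattern; hence $w_p = x$, giving $01 \ldots k^y x^{n-k-y}$. The main obstacle is the induction step in the $j = i - 1$ case, where $w_i = w_{i+1}$ prevents a direct application of $210$-avoidance and one must instead propagate a repeated-value restriction via $001$-avoidance to preclude ascents in the suffix; the remaining cases follow cleanly from the two pattern-avoidance conditions together with the ascent-sequence bound.
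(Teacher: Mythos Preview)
Your proof is correct and takes a genuinely different route from the paper's. The paper invokes Lemma~2.4 of Duncan and Steingr\'imsson to conclude immediately that any 001-avoiding ascent sequence is a restricted growth function; from the RGF property it follows that $0,1,\ldots,M$ all appear with first occurrences in order, and a single application of 001-avoidance (ruling out $aaM$) forces the prefix $01\ldots M$. The suffix is then handled by a short two-case argument on pairs of small letters. By contrast, you avoid the external citation entirely and establish the staircase prefix by an explicit induction on $i$, splitting into the cases $j<i-1$ (where 210-avoidance applies directly to the triple $(i-1,j,w_p)$) and $j=i-1$ (where you must instead propagate the repeated value $w_p$ back to its prefix occurrence and use 001-avoidance to kill any later ascent). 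Your approach is longer but self-contained, and it makes transparent exactly where each of the two forbidden patterns is used in building the prefix; the paper's approach is shorter but imports the RGF structure wholesale. The suffix analyses are essentially the same in both proofs. One small point: your sentence ``$w_i=k+1$ would make position $i-1$ an ascent beyond the allotted $k$'' tacitly takes $i$ to be the \emph{first} index with $w_i=k+1$ (so that $w_{i-1}<k+1$); this is harmless but worth making explicit.
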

\begin{proof}
    Let $w = w_{1} \ldots w_{n} \in \APK{n}{k}$ and define $M$ to be the largest letter in $w$.
    Note that our statement is trivially true when $k=0$, so we may consider when $k>0$ and, therefore, there are at least 2 distinct letters in $w$.
    Since $w$ avoids 001 and 001 is a subpattern of 01012, $w$ is an RGF by Lemma 2.4 of Duncan and Steingr\'imsson \cite{DS}.
    So, $0,1, \ldots, M$ must all appear in $w$ and their first occurences must appear in this order.
    Suppose that $a < M$ occurs more than once before $M$, then we'd have the subsequence $aaM$ which forms a 001 pattern.
    Therefore, $w = 01 \ldots (M-1)M w'$ for some word $w'$.
    Suppose that $w'$ contains letters $a,b$ with $M > a > b$.
    If $a$ occurs first in $w'$, then we have the subsequence $Mab$ which forms a 210 pattern.
    If $b$ occurs first in $w'$, then, since $b$ must occur in $01 \ldots (M-1)M$, we have the subsequence $bba$ which forms a 001 pattern.
    Thus, $w'$ can contain at most one letter strictly less than $M$ which implies that $w'$ must be weakly decreasing.
    Therefore, $w'$ has no ascents, which gives $k = \operatorname{asc}(w) = \operatorname{asc}(01 \ldots Mw') = M$.
    So, we have that \[
        w = 01 \ldots k^{n-k}
    \]
    if no letter other than $M$ appears in $w'$, and otherwise \[
        w = 01 \ldots k^{y}x^{n-k-y}
    \]
    where $x$ is the letter strictly less than $k$ which occurs in $w'$.
\end{proof}

We are now in a position to prove that $\APK{n+1}{k}$ is enumerated by $R_{n,k}$ by constructing a bijection between $\APK{n+1}{k}$ and $B_{k}(n)$.

\begin{theorem} \label{binascbijection}
    For $n,k \in \Z$, we have \[
        |\APK{n+1}{k}| = R_{n,k}.
    \]
\end{theorem}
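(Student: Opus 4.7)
The plan is to use Theorem \ref{ras-bin}, which identifies $R_{n,k}$ with $|B_k(n)|$, and then exhibit an explicit bijection $\phi\colon \APK{n+1}{k} \to B_k(n)$. The degenerate cases ($k < 0$, $n < 0$, or $k > n$) are immediate since both sets are empty, and I would handle them first as a quick preamble.

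For the main case, I would apply Lemma \ref{ascform} (with length $n+1$) to express every $w \in \APK{n+1}{k}$ as either the single ``staircase'' word $01\cdots k^{n+1-k}$ or as $01\cdots k^{y}x^{n+1-k-y}$ with $1 \le y \le n-k$ and $0 \le x \le k-1$. This gives a decomposition of $\APK{n+1}{k}$ into one special element plus a family indexed by $(y,x)$ in the rectangle $\{1,\dots,n-k\}\times\{0,\dots,k-1\}$. Similarly, the proof of Theorem \ref{ras-bin} already presents $B_k(n)$ as the single word $1^k 0^{n-k}$ together with the words $1^{k-i}0^{j}1^{i}0^{n-k-j}$ indexed by $(i,j)\in\{1,\dots,k\}\times\{1,\dots,n-k\}$. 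Both sets therefore have a uniform ``one plus $k(n-k)$'' structure.

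With the structures aligned, the bijection $\phi$ essentially writes itself: send the staircase word $01\cdots k^{n+1-k}$ to the block word $1^{k}0^{n-k}$, and send $01\cdots k^{y}x^{n+1-k-y}$ to $1^{x}0^{y}1^{k-x}0^{n-k-y}$. This amounts to identifying the parameter pairs via $(i,j) = (k-x, y)$. Since $(y,x)\mapsto (y,k-x)$ is clearly a bijection between $\{1,\dots,n-k\}\times\{0,\dots,k-1\}$ and $\{1,\dots,n-k\}\times\{1,\dots,k\}$, and since each word in each family is uniquely determined by its parameters, $\phi$ is a bijection, completing the argument.

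The main ``obstacle'' here is really just careful bookkeeping on the parameter ranges and on the isolated 0-ascent element in each family; once Lemma \ref{ascform} has done the structural work on the ascent-sequence side, no new combinatorics beyond matching rectangles is required.
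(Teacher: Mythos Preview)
Your proposal is correct and is essentially identical to the paper's own proof: the paper also dispatches the degenerate cases, invokes Lemma~\ref{ascform} for length $n+1$, and defines the bijection by matching parameters, sending $1^{k-x}0^{y}1^{x}0^{n-k-y}\leftrightarrow 01\cdots k^{y}(k-x)^{n+1-k-y}$ (your map $\phi$ is precisely the inverse of the paper's $f$). The only cosmetic difference is the direction in which the bijection is written.
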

\begin{proof}
    When $n,k < 0$, we have that $\APK{n+1}{k} = \emptyset$, so $|\APK{n+1}{k}|=0=R_{n,k}$ in such cases.
    Also, since a word of length $n+1$ can have at most $n$ ascents, $\APK{n+1}{k} = \emptyset$ when $k > n$, so $|\APK{n+1}{k}|=0=R_{n,k}$ in such cases.
    So we may assume that $0 \le k \le n$.
    Define $f\text{: } B_{k}(n) \to \APK{n+1}{k}$ by
     \[f(1^{k}0^{n-k}) = 01 \ldots k^{n+1-k} \text{ and }
        f(1^{k-x}0^{y}1^{x}0^{n-k-y}) = 01 \ldots k^{y}(k-x)^{n+1-k-y}
    \]
    where $1 \le x \le k$ and $1 \le y \le n-k$.
    By Lemma \ref{ascform}, all elements of $\APK{n+1}{k}$ have such a form.
    This map is invertible since we only need $x,y$ to reconstruct the original binary word, and $x,y$ can be recovered from the image.
    Therefore, $f$ is a bijection, yielding the desired result.
\end{proof}

With this bijection, all proofs of identities in Section \ref{identities} can be translated into the language of ascent sequences with some effort.
For example, a proof of Theorem \ref{rowsum} using ascent sequences is given in Proposition 9 of Baxter and Pudwell \cite{BP}.


\section{A Generalization}

We return briefly to our combinatorial interpretation of $R_{n,k}$ as enumerating $B_{k}(n)$ to offer a natural generalization.
We have shown that $R_{n,k}$ counts binary words with length $n$, $k$ 1's, and at most 1 ascent, but the choice to only count such binary words with at most 1 ascent is (more or less) arbitrary.
So, in this section, we consider a generalization where we allow for at most $j$ ascents. Let $B_{k}^{(j)}(n)$ be the set of all binary words of length $n$ with exactly $k$ 1's and at most $j$ ascents, and set $R_{n,k}^{(j)} = |B_{k}^{(j)}(n)|$ and $B^{(j)}(n) = \bigcup_{k} B_{k}^{(j)}(n)$. An equivalent generalization was considered in a recent paper by Gregory et al \cite{GKW}, where they discuss the numbers they refer to $\binom{n}{k}_j$, which are exactly what we call $R_{n,k}^{(j)}$. In that paper the authors give a different combinatorial interpretation for the generalized Rascal numbers---namely, they enumerate $k$-element subsets of $\{1,2,\ldots,n\}$ whose intersection with $\{1,2,\ldots, n-k\}$ contains at most $j$ elements---and they independently prove some of our following results, which we prove using our binary word interpretation. That said, Theorem \ref{genrowsum} is not given in the Gregory paper, and indeed it proves a conjecture from that paper. For completeness, Theorem \ref{gregbij} provides a bijection between the binary strings enumerated by $R_{n,k}^{(j)}$ and the restricted subsets enumerated by $\binom{n}{k}_j$.

With this generalization, it's natural to ask how (or if) our previous results apply to this generalization. To begin, our previous use of a quadratic formula no longer holds for proving unimodality, so a more computational proof would be needed to show this fact, and Gregory et al \cite{GKW} give such a proof; on the other hand, these numbers are symmetric, and the proof is identical to the $j=1$ case (see Theorem \ref{sym}). Indeed, we expect that many of the identities in Section \ref{identities} can be generalized to hold for $R_{n,k}^{(j)}$ as well, however, the proofs seem unlikely to differ greatly from the $j=1$ case. As such, we leave it to the reader to explore them further. That said, there are a couple of results which  do differ enough to warrant special mention, and there is also one result that is left open to the reader. To wit, the next result yields a recurrence that is nearly identical to Equation (\ref{fleron}), whose proof follows a similar idea to that in Theorem \ref{rec1}.

\begin{theorem}[Generalized Fleron's Recurrence]
    Set $R_{n,n}^{(j)} = R_{n,0}^{(j)} = 1$ for $n \ge 0$ and $R_{n,k}^{(j)} = 0$ when $n, k < 0$ or $k > n$.
    Also, $R_{n,k}^{(0)} = 1$ when $0 \le k \le n$.
    Then for $2 \le n$, $0 \le k \le n$, and $1 \le j$ we have \[
        R_{n,k}^{(j)} = R_{n-1,k}^{(j)} + R_{n-1,k-1}^{(j)} - R_{n-2,k-1}^{(j)} + R_{n-2,k-1}^{(j-1)}.
    \]
\end{theorem}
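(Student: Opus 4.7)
The plan is to mimic the proof of Theorem \ref{rec1} by partitioning $B_k^{(j)}(n)$ according to its last letter and counting each part separately. For words ending in $0$, the map that strips the terminal $0$ is a bijection onto $B_k^{(j)}(n-1)$: it obviously preserves the number of $1$'s, and it preserves the ascent count, since the position $n-1$ can never be an ascent position of $w$ when $w_n = 0$. Hence words ending in $0$ contribute $R_{n-1,k}^{(j)}$, which accounts for the first term of the recurrence.

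For words ending in $1$, write $w = w' \cdot 1$, where $w'$ has length $n-1 \ge 1$ and exactly $k-1$ ones. The key observation is that appending a $1$ to $w'$ creates one new ascent precisely when $w'$ ends in $0$, and no new ascent when $w'$ ends in $1$. Thus the elements of $B_k^{(j)}(n)$ ending in $1$ split into two disjoint families: those for which $w'$ ends in $1$ with $\operatorname{asc}(w') \le j$, i.e.\ $w' \in B_{k-1}^{(j)}(n-1)$ ending in $1$; and those for which $w'$ ends in $0$ and satisfies $\operatorname{asc}(w') \le j-1$, i.e.\ $w' \in B_{k-1}^{(j-1)}(n-1)$ ending in $0$.

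Applying the same trailing-$0$ bijection a second time, the number of $w' \in B_{k-1}^{(j-1)}(n-1)$ ending in $0$ equals $R_{n-2,k-1}^{(j-1)}$, which contributes the final term of the recurrence. The number of $w' \in B_{k-1}^{(j)}(n-1)$ ending in $1$ equals the total $R_{n-1,k-1}^{(j)}$ minus those ending in $0$, which again by the bijection is $R_{n-1,k-1}^{(j)} - R_{n-2,k-1}^{(j)}$. Summing all three contributions produces exactly $R_{n,k}^{(j)} = R_{n-1,k}^{(j)} + R_{n-1,k-1}^{(j)} - R_{n-2,k-1}^{(j)} + R_{n-2,k-1}^{(j-1)}$, as desired.

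The only real subtlety is the ascent-preservation property of the trailing-$0$ bijection, which reduces to the one-line observation given above, so there is no serious obstacle here. Boundary cases (small $n$, or $k \in \{0, n\}$) are handled automatically by the convention that $R_{n',k'}^{(j)} = 0$ whenever $k' < 0$ or $k' > n'$, since the corresponding subsets of $B_k^{(j)}(n)$ are then empty.
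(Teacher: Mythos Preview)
Your proof is correct and follows essentially the same approach as the paper: partition $B_k^{(j)}(n)$ by the last letter, use the trailing-$0$ bijection for words ending in $0$, and for words ending in $1$ further split according to whether $w'$ ends in $0$ or $1$. The only cosmetic difference is that the paper phrases the ``ends in $1$'' count as $R_{n-1,k-1}^{(j)}$ minus the bad cases (those $w'$ ending in $0$ with exactly $j$ ascents), whereas you count the two allowed sub-cases directly; the underlying bijections and arithmetic are identical.
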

\begin{proof}
    Every $w \in B_{k}^{(j)}(n)$ either ends in a 0 or 1, and so define $T(w) = w_{1} \cdots w_{n-1}$ for any word $w = w_{1} \cdots w_{n}$.
    \begin{enumerate}
        \item[(1)] If $w$ ends in a 0 then $T(w) \in B_{k}^{(j)}(n-1)$.
        Since $T$ is an invertible map when acting on words ending in 0, there are $R_{n-1,k}^{(j)}$ such $w$.

        \item[(2)] If $w$ ends in a 1 then $T(w) \in B_{k-1}^{(j)}(n-1)$.
        Furthermore, $T(w)$ either ends in a 0 and has at most $j-1$ ascents or still ends in a 1 with at most $j$ ascents.
        So the only words in $B_{k-1}^{(j)}(n-1)$ which cannot be mapped to by $T$ acting on words ending in a 1 are those which end in a 0 and have exactly $j$ ascents.
        There are $R_{n-2,k-1}^{(j)}$ such words in $B_{k-1}^{(j)}(n-1)$ which end in 0, by the same argument in part (1), of which exactly $R_{n-2,k-1}^{(j-1)}$ have fewer than $j$ ascents.
        Thus, there are $R_{n-2,k-1}^{(j)} - R_{n-2,k-1}^{(j-1)}$ elements of $B_{k-1}^{(j)}(n-1)$ which end in a 0 and have $j$ ascents.
        Therefore, there are exactly \[
            R_{n-1,k-1}^{(j)} - (R_{n-2,k-1}^{(j)} - R_{n-2,k-1}^{(j-1)})
        \]
        such $w$.
    \end{enumerate}
    Summing the two cases gives our desired recurrence.
\end{proof}

Since this generalization satisfies a recurrence very similar to Fleron's recurrence, it seems natural that they should satisfy a recurrence similar to Equation (\ref{grec}), proven in Theorem \ref{genrec}.
We conjecture that \[
    R_{n,k}^{(j)} = \frac{R_{n-1,k}^{(j)}R_{n-1,k-1}^{(j)} + E(n,k,j)}{R_{n-2,k-1}^{(j)}}
\]
for some $E\text{: } \Z^{3} \to \{0,1,2\ldots\}$.
We have not been able to find a closed form for $E$ but, through numerical tests, we believe one to exist. While we may not have a closed form for $E$, we do have a closed form for $R_{n,k}^{(j)}$ in terms of binomial coefficients.

\begin{theorem}
    For $0 \leq n,k,j \in \Z$, \[
        R_{n,k}^{(j)} = \sum_{r=0}^{j} \binom{k}{i}\binom{n-k}{i}.
    \]
\end{theorem}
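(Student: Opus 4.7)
The plan is to reduce the theorem to a finer statement: namely, that the number of binary words of length $n$ with exactly $k$ 1's and \emph{exactly} $r$ ascents is $\binom{k}{r}\binom{n-k}{r}$. Once this is in hand, summing over $r = 0, 1, \ldots, j$ gives $R_{n,k}^{(j)}$ directly from the definition of $B_k^{(j)}(n)$. (I will assume that the right side of the stated theorem has a typo and should read $\binom{k}{r}\binom{n-k}{r}$, since with $\binom{k}{i}\binom{n-k}{i}$ the summand is constant in $r$.)

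For the refined statement, I would argue by a canonical decomposition. Given a binary word $w$ with $k$ 1's and exactly $r$ ascents, each ascent is a $0 \to 1$ transition, so there are precisely $r$ maximal blocks of 1's that are preceded by a 0-block, plus possibly an initial 1-block. Thus $w$ may be uniquely written as
\[
w = 1^{a_0} \, 0^{b_1} 1^{a_1} \, 0^{b_2} 1^{a_2} \cdots 0^{b_r} 1^{a_r} \, 0^{b_{r+1}},
\]
with $a_0, b_{r+1} \ge 0$ and $a_1, \ldots, a_r, b_1, \ldots, b_r \ge 1$. This exactness is the step that needs the most care: I would verify it by reading off the runs of $w$ from left to right and checking that the constraints on strict positivity correspond exactly to having $r$ ascent positions.

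Then the counting is a routine stars-and-bars computation. The $a_i$'s must satisfy $a_0 + a_1 + \cdots + a_r = k$ with $a_0 \ge 0$ and $a_1, \ldots, a_r \ge 1$; substituting $a_i' = a_i - 1$ for $i \ge 1$ reduces this to counting nonnegative integer solutions of $a_0 + a_1' + \cdots + a_r' = k - r$, of which there are $\binom{k}{r}$. An identical argument on the $b_i$'s (with $b_1, \ldots, b_r \ge 1$ and $b_{r+1} \ge 0$) yields $\binom{n-k}{r}$ choices. These choices are independent, so by the product rule the total is $\binom{k}{r}\binom{n-k}{r}$, with the convention that $\binom{k}{r} = 0$ if $r > k$ (which correctly handles the boundary cases where such a word cannot exist).

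The main obstacle, such as it is, is simply the bookkeeping for the boundary cases: one should verify that the decomposition still works when $a_0 = 0$ (word starts with 0), $b_{r+1} = 0$ (word ends with 1), or $r = 0$ (the unique word $1^k 0^{n-k}$, giving $\binom{k}{0}\binom{n-k}{0} = 1$). Once these are confirmed, summing yields
\[
R_{n,k}^{(j)} = \sum_{r=0}^{j} \binom{k}{r}\binom{n-k}{r},
\]
as desired.
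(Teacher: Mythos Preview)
Your proof is correct and is essentially identical to the paper's: both decompose a word with exactly $r$ ascents as $1^{x_0}0^{y_1}1^{x_1}\cdots 0^{y_r}1^{x_r}0^{y_0}$ (your $a_i,b_i$), then apply stars-and-bars to the $x$'s and $y$'s separately to obtain $\binom{k}{r}\binom{n-k}{r}$, and sum over $r$. You are also right that the $i$'s in the statement are typos for $r$.
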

\begin{proof}
    Let $w \in B_{k}^{(j)}(n)$ such that $\operatorname{asc}(w) = r$.
    Then $w = 1^{x_{0}}0^{y_{1}}1^{x_{1}} \ldots 0^{y_{r}}1^{x_{r}}0^{y_{0}}$ with 
\[        \sum_{i=0}^{r} x_{i} = k \ \text{ and }
        \sum_{i=0}^{r} y_{i} = n-k,\]
    where $x_{0},y_{0} \ge 0$ and $x_{i},y_{i} > 0$ for $i >0$.
    Note that solutions to the above equations are in one-to-one correspondence with positive integral solutions to \[ \sum_{i=0}^{r} x_{i}' = k+1 \hspace{5mm} \text{and} \hspace{5mm} \sum_{i=0}^{r} y_{i}' = n-k+1, \] respectively.
    Thus, there are \[
        \binom{k+1-1}{i+1-1} = \binom{k}{i}
    \]
    solutions to left equation and \[
        \binom{n-k+1-1}{i+1-1} = \binom{n-k}{i}
    \]
    solutions to the right equation.
    Thus, there are $\binom{k}{i}\binom{n-k}{i}$ such $w$.
    Summing over all values of $r$ we get our desired result.
\end{proof}

Finally, we give a formula to compute the row sums of generalized Rascal triangles, and we note that this theorem also proves Conjecture 7.5 from Gregory et al \cite{GKW}, which we give as a corollary. 

\begin{theorem}[Generalized Row Sum]
    For $0 \leq n,j \in \Z$,   \[
        \sum_{k=0}^{n} R_{n,k}^{(j)} = \sum_{k=0}^{2j+1} \binom{n}{k}.
    \]
    \label{genrowsum}
\end{theorem}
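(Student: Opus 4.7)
\textbf{Proof plan for Theorem \ref{genrowsum}.} The left-hand side counts $B^{(j)}(n) = \bigcup_{k=0}^n B_{k}^{(j)}(n)$, the set of all binary words of length $n$ with at most $j$ ascents (regardless of the number of 1's). My plan is to partition this set by the exact number of ascents, count each class via a composition argument, and then collapse the resulting binomial sum using Pascal's identity to reach the right-hand side.

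First I would write $|B^{(j)}(n)| = \sum_{r=0}^{j} a_r$, where $a_r$ is the number of binary words of length $n$ with exactly $r$ ascents. To compute $a_r$, I would use the decomposition from the proof of the preceding theorem: every such word has the shape
\[
    w = 1^{x_{0}}0^{y_{1}}1^{x_{1}}0^{y_{2}} \cdots 0^{y_{r}}1^{x_{r}}0^{y_{0}},
\]
where $x_0, y_0 \ge 0$, $x_i, y_i \ge 1$ for $1 \le i \le r$, and the total length is $\sum x_i + \sum y_i = n$. Unlike in the previous theorem, here I do not fix the number of 1's, so I simply count compositions: after substituting $x_i' = x_i - 1$, $y_i' = y_i - 1$ for $1 \le i \le r$, the count becomes the number of non-negative integer solutions to a single linear equation in $2r+2$ variables summing to $n - 2r$, which is $\binom{(n-2r) + (2r+1)}{2r+1} = \binom{n+1}{2r+1}$. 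Hence $a_r = \binom{n+1}{2r+1}$ and
\[
    \sum_{k=0}^{n} R_{n,k}^{(j)} = \sum_{r=0}^{j} \binom{n+1}{2r+1}.
\]

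Finally, I would apply Pascal's identity $\binom{n+1}{2r+1} = \binom{n}{2r+1} + \binom{n}{2r}$ to each summand. As $r$ ranges over $\{0, 1, \ldots, j\}$, the indices $2r$ and $2r+1$ together sweep out every integer from $0$ to $2j+1$ exactly once, yielding
\[
    \sum_{r=0}^{j} \binom{n+1}{2r+1} = \sum_{r=0}^{j}\left[\binom{n}{2r} + \binom{n}{2r+1}\right] = \sum_{k=0}^{2j+1} \binom{n}{k},
\]
which is the desired identity. The only mildly delicate step is the bookkeeping in counting the compositions: one must remember that $x_0$ and $y_0$ are allowed to be zero while the interior exponents are strictly positive, since otherwise the ascent count would change. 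Everything else is routine, and no appeal to the generalized Fleron recurrence or induction on $j$ is needed.
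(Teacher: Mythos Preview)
Your proof is correct, but it follows a different route from the paper's. You partition $B^{(j)}(n)$ by the exact ascent number $r$, use the block decomposition to count each stratum as $\binom{n+1}{2r+1}$ via compositions, and then split each term with Pascal's identity to obtain $\sum_{k=0}^{2j+1}\binom{n}{k}$. The paper instead gives a single bijection: to a subset $S\subseteq [n]$ with $|S|\le 2j+1$ it associates the word obtained by placing a divider before position $i$ for each $i\in S$ and filling the resulting sections alternately with $1$'s and $0$'s according to the parity of the section label; the inverse records $\operatorname{Des}(w)\cup\operatorname{Asc}(w)$ together with $\{n\}$ when $w_1=0$. The paper's argument is a one-step bijection that needs no algebraic manipulation and explains directly why subsets of size at most $2j+1$ appear, while your approach is a natural continuation of the preceding closed-form theorem and has the side benefit of giving the explicit count $\binom{n+1}{2r+1}$ for words with exactly $r$ ascents. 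Both are short and self-contained; neither requires the generalized Fleron recurrence.
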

\begin{proof}
    Note that the left hand side counts $B^{(j)}(n)$ by definition. 
    We can also count $B^{(j)}(n)$ by beginning with some $S \subseteq [n]$ where $|S| \le 2j+1$. Next, let $w = w_1 \ldots w_n$, place a divider before the $w_i$ for each $i \in S$, and label each divided section from left to right with $0, 1, \ldots, |S|$ in increasing order.
    For each section, if it is labeled with an even number then set all $w_i$ in the section equal to 1's, and if it is labeled with an odd number then set all $w_i$ in the section to 0's.
    Note that we can invert this process for a $w = w_{1} \ldots w_{n} \in B^{(j)}(n)$ by taking $\operatorname{Des}(w) \cup \operatorname{Asc}(w)$ and unioning the singleton set $\{n\}$ if $w_{1} = 0$.
    For a fixed $0 \le k \le 2j+1$, there are $\binom{n}{k}$ such $S$ with $|S|=k$, so by summing over all $k$ we get $|B^{(j)}(n)|$, thus achieving our desired result.
\end{proof}

\begin{corollary}[Conjecture 7.5 of \cite{GKW}]
    For all $j \in \Z$ with $j \ge 0$, \[
        \sum_{k=0}^{4j+3} R^{(j)}_{4j+3,k} = 2^{4j+2}.
    \]
\end{corollary}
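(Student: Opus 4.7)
The plan is to deduce this corollary directly from the Generalized Row Sum theorem (Theorem \ref{genrowsum}) combined with the standard symmetry of binomial coefficients. Since the corollary sets $n = 4j+3$, applying Theorem \ref{genrowsum} with this value of $n$ immediately gives
\[
    \sum_{k=0}^{4j+3} R_{4j+3,k}^{(j)} = \sum_{k=0}^{2j+1} \binom{4j+3}{k},
\]
so the entire task reduces to showing that $\sum_{k=0}^{2j+1} \binom{4j+3}{k} = 2^{4j+2}$.

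Next, I would observe the key numerical coincidence that makes the result clean: the upper index $4j+3$ is odd, and the truncation $k \leq 2j+1$ is exactly the ``lower half'' of the row. Concretely, under the involution $k \mapsto (4j+3) - k$, the range $0 \leq k \leq 2j+1$ is mapped bijectively onto the range $2j+2 \leq k \leq 4j+3$, and $\binom{4j+3}{k} = \binom{4j+3}{4j+3-k}$. Therefore the two partial sums
\[
    \sum_{k=0}^{2j+1} \binom{4j+3}{k} \quad \text{and} \quad \sum_{k=2j+2}^{4j+3} \binom{4j+3}{k}
\]
are equal, and together they sum to the full row $\sum_{k=0}^{4j+3} \binom{4j+3}{k} = 2^{4j+3}$. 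Dividing by two gives $\sum_{k=0}^{2j+1} \binom{4j+3}{k} = 2^{4j+2}$, which is precisely the desired equality.

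There is no real obstacle here; the corollary is essentially a one-line consequence of the theorem just proved, and the only thing to notice is that the parameters $n = 4j+3$ and the cutoff $2j+1$ are arranged so that the truncated binomial sum is exactly half of the full row. In the write-up I would present it as a short two-step computation rather than introducing any additional combinatorial machinery.
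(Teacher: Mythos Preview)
Your proposal is correct and essentially identical to the paper's own proof: both apply Theorem~\ref{genrowsum} with $n=4j+3$ to reduce to $\sum_{k=0}^{2j+1}\binom{4j+3}{k}$, then invoke the symmetry $\binom{4j+3}{k}=\binom{4j+3}{4j+3-k}$ to conclude that this partial sum is half of the full row sum $2^{4j+3}$.
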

\begin{proof}
    By Theorem \ref{genrowsum}, we have \[
        \sum_{k=0}^{4j+3} R^{(j)}_{4j+3,k} = \sum_{k=0}^{2j+1} \binom{4j+3}{k}
    \]
    which is the sum of the first half of the $(4j+3)$-rd row of pascal's triangle.
    Due to symmetry of binomial coefficients, this sum is half that of the whole row sum, which is $2^{4j+3}$.
\end{proof}

We also use Theorem \ref{genrowsum} to provide a slightly more direct proof of the following result.

\begin{corollary}[Theorem 7.3 of \cite{GKW}]
    For all $0 \leq n,j \in \Z$, \[
        \Delta^{2j+1}\left(\sum_{k=0}^{n} R_{n,k}^{(j)} \right) = 1
    \]
    where $\Delta$ is the forward difference operator with respect to $n$, i.e., for the sequence $(a_n)_{n \ge 0}$,
    $\Delta a_n = a_{n+1} - a_n$.
\end{corollary}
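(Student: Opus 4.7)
The plan is to invoke Theorem \ref{genrowsum} to replace the row sum by a truncated binomial sum, and then exploit the well-known behavior of the forward difference operator on binomial coefficients.

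First I would rewrite the quantity inside the difference operator using Theorem \ref{genrowsum}, so that the task reduces to showing
\[
    \Delta^{2j+1}\left( \sum_{k=0}^{2j+1} \binom{n}{k} \right) = 1.
\]
Since $\Delta$ is linear in $n$, I can push it inside the (finite) sum and compute $\Delta^{2j+1}\binom{n}{k}$ term-by-term.

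Next, I would use the identity $\Delta \binom{n}{k} = \binom{n+1}{k} - \binom{n}{k} = \binom{n}{k-1}$ (Pascal's rule), which iterated $2j+1$ times gives $\Delta^{2j+1}\binom{n}{k} = \binom{n}{k-2j-1}$. For $0 \le k \le 2j$, the lower index is negative so the term vanishes, while for $k = 2j+1$ we get $\binom{n}{0} = 1$. Summing yields exactly $1$, as desired.

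There is essentially no obstacle here; the only care needed is to justify the iterated Pascal identity (an easy induction on the number of applications of $\Delta$) and to confirm that terms with negative lower index are zero under the standard convention for binomial coefficients. The whole argument is a short two- or three-line computation once Theorem \ref{genrowsum} is applied.
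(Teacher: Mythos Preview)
Your proposal is correct and follows essentially the same approach as the paper's own proof: apply Theorem \ref{genrowsum}, use linearity of $\Delta$ to bring it inside the finite sum, iterate the identity $\Delta\binom{n}{k}=\binom{n}{k-1}$ a total of $2j+1$ times, and observe that only the $k=2j+1$ term survives as $\binom{n}{0}=1$.
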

\begin{proof}
    By Theorem \ref{genrowsum}, we have \begin{align*}
        \Delta^{2j+1}\left(\sum_{k=0}^{n} R_{n,k}^{(j)} \right) &= \Delta^{2j+1}\left(\sum_{k=0}^{2j+1} \binom{n}{k} \right) \\
        &= \sum_{k=0}^{2j+1} \Delta^{2j+1}\binom{n}{k}
    \end{align*}
    where the last equality follows by linearity of $\Delta$.
    It is a well-know result that \[
        \Delta \binom{n}{k} = \binom{n}{k-1}.
    \]
    So, our sum becomes \begin{align*}
        \Delta^{2j+1}\left(\sum_{k=0}^{n} R_{n,k}^{(j)} \right) &= \sum_{k=0}^{2j+1} \binom{n}{k-(2j+1)} \\
        &= \binom{n}{0} = 1.
    \end{align*}
\end{proof}

\begin{theorem}
    For $n,j \in \Z$ with $n,j \ge 0$, \[
        \sum_{k=0}^{n} (-1)^{k}R_{n,k}^{(j)} = \begin{cases}
            0 & n \equiv 1 \pmod{2} \\
            (-1)^{j} \binom{n/2 - 1}{j} & n \equiv 0 \pmod{2}
        \end{cases}
    \] 
    with the convention $\binom{-1}{j} = (-1)^j$.
    \label{genaltrowsum}
\end{theorem}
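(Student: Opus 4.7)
The plan is to reduce the alternating row sum to a standard partial alternating binomial sum via the closed form $R_{n,k}^{(j)} = \sum_{r=0}^{j} \binom{k}{r}\binom{n-k}{r}$ proved just above. After substituting this expression and swapping the order of summation, we are left to analyze
\[
    \sum_{k=0}^{n} (-1)^{k} R_{n,k}^{(j)} = \sum_{r=0}^{j} \left( \sum_{k=0}^{n} (-1)^{k} \binom{k}{r}\binom{n-k}{r} \right).
\]
The idea is to evaluate the inner $k$-sum as a coefficient of a generating function, show that it vanishes when $n$ is odd (which handles the first case of the theorem outright), and that when $n$ is even it telescopes into a partial alternating binomial sum in $r$ to which a well-known closed form applies.

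For the inner sum, I would write it as $[x^n] A_r(x) B_r(x)$ where
\[
    A_r(x) = \sum_{k \ge 0} (-1)^{k}\binom{k}{r} x^{k} = \frac{(-x)^{r}}{(1+x)^{r+1}}, \qquad B_r(x) = \sum_{k \ge 0} \binom{k}{r} x^{k} = \frac{x^{r}}{(1-x)^{r+1}}.
\]
Multiplying gives $A_r(x)B_r(x) = (-1)^r x^{2r}(1-x^{2})^{-(r+1)}$, whose expansion involves only even powers of $x$. Hence the inner sum is $0$ when $n$ is odd, and when $n = 2s$ is even it equals $(-1)^{r} \binom{s}{r}$, using the standard series $(1-x^2)^{-(r+1)} = \sum_{m\ge 0}\binom{m+r}{r}x^{2m}$ with $m = s-r$.

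It then remains to show, for $n=2s$, that $\sum_{r=0}^{j}(-1)^{r}\binom{s}{r} = (-1)^{j}\binom{s-1}{j}$. I would prove this by a quick induction on $j$: the base case $j=0$ is immediate, and the inductive step reduces to Pascal's identity $\binom{s}{j+1} = \binom{s-1}{j+1} + \binom{s-1}{j}$. For the edge case $n=0$ (so $s=0$) the identity becomes $\sum_{r=0}^{j}(-1)^{r}\binom{0}{r} = 1$, which matches $(-1)^{j}\binom{-1}{j} = (-1)^{j}(-1)^{j} = 1$ under the paper's stated convention $\binom{-1}{j} = (-1)^{j}$. Combining the two cases completes the proof.

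The main obstacle I anticipate is essentially bookkeeping: one must be careful about parities in the generating-function extraction (verifying $x^{2r}(1-x^2)^{-(r+1)}$ contributes only in even degrees $\ge 2r$, so odd $n$ really gives $0$) and about the $s=0$ boundary, where the convention $\binom{-1}{j}=(-1)^{j}$ is needed to make the statement work. As a sanity check, for $n$ odd the vanishing can be re-derived very cleanly without generating functions from the symmetry $R_{n,k}^{(j)} = R_{n,n-k}^{(j)}$ (mentioned in this section), since pairing $k$ with $n-k$ gives $(-1)^{k} = -(-1)^{n-k}$, so the alternating sum negates itself; I would mention this as an alternative to reinforce correctness.
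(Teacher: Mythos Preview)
Your proposal is correct, and the generating-function computation goes through cleanly: the Cauchy-product reading of $[x^n]A_r(x)B_r(x)$ is exactly $\sum_{k=0}^n(-1)^k\binom{k}{r}\binom{n-k}{r}$, the product collapses to $(-1)^r x^{2r}(1-x^2)^{-(r+1)}$, and the partial alternating sum $\sum_{r=0}^{j}(-1)^r\binom{s}{r}=(-1)^j\binom{s-1}{j}$ is the standard telescoping via Pascal. The edge case $s=0$ is handled by the stated convention, as you note.

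However, your route is quite different from the paper's. The paper gives a purely bijective proof in the spirit of the rest of the article: it puts the sign $(-1)^{\#\text{1's}}$ on each word in $B^{(j)}(n)$ and builds a chain of sign-reversing involutions $I_0,I_1,\dots,I_j$, each one toggling a 1 against a 0 in a specified block of the word, until the surviving fixed points are exactly the words of the shape $1^{2z_0}\prod_{i=1}^{\operatorname{asc}(w)}0^{1}1^{2z_i+1}$; counting those by compositions yields $(-1)^r\binom{n/2}{r}$ for each ascent count $r$, and the same partial alternating binomial sum finishes. So the two proofs converge only at the very last step. What you gain is brevity and a self-contained algebraic argument that leans on the closed form already established; what the paper's approach buys is an explicit involution-principle witness consistent with its combinatorial program, requiring no generating functions. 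Your symmetry remark for odd $n$ is also a pleasant shortcut that the paper does not isolate.
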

\begin{proof}
    To prove this result, we will repeatedly use the involution principle to pair off words of opposite sign until the remaining words have a nice form which we can easily enumerate.
    First, define $\operatorname{wt}\text{: } B^{(j)}(n) \to B^{(j)}(n)$ by $\operatorname{wt}(w) = (-1)^{\#(\text{1's in $w$})}$.
    Next, define $I_0\text{: } B^{(j)}(n) \to B^{(j)}(n)$ as follows: if $w = 1^{x_0} w' 0^{y_0}$ where $w'$ does not begin in a 1 or end in a 0, then
    \begin{enumerate}
        \item[(1)] $I_0(w) = 1^{x_0 - 1} w' 0^{y_0 + 1}$ whenever $2 \nmid x_0$, 
        \item[(2)] $I_0(w) = 1^{x_0+1} w' 0^{y_0-1}$ whenever $2 \mid x_0$ and $y_0 > 0$, and 
        \item[(3)] $I_0(w) = w$ whenever $2 \mid x_0$ and $y_0 = 0$.
    \end{enumerate}
    Note that $I_0$ is a weight-preserving, sign-reversing involution and, for $w = 1^{x_0} w' 0^{y_0} \in \operatorname{fix}(I_0)$, $2 \mid x_0$ and $y_0 = 0$ so $w = 1^{2x_0'} w'$.
    Now, for $d > 0$, define $I_d\text{: } \operatorname{fix}(I_{d-1}) \to \operatorname{fix}(I_{d-1})$ as follows:
    if \[
        w = 1^{2x_0} \left( \prod_{i=1}^{\operatorname{asc}(w)} 0^{y_i}1^{x_i} \right) \in \operatorname{fix}(I_{d-1}),
    \]
    then \begin{enumerate}
        \item[(1)] $\displaystyle
            I_d(w) = 1^{2x_0} \left( \prod_{i=1}^{d-1} 0^{y_i}1^{x_i} \right) 0^{y_d + 1} 1^{x_d - 1} \left( \prod_{i=d+1}^{\operatorname{asc}(w)} 0^{y_i}1^{x_i} \right)$ whenever $2 \mid x_d$,
        
        \item[(2)] $\displaystyle
            I_d(w) = 1^{2x_0} \left( \prod_{i=1}^{d-1} 0^{y_i}1^{x_i} \right) 0^{y_d - 1} 1^{x_d + 1} \left( \prod_{i=d+1}^{\operatorname{asc}(w)} 0^{y_i}1^{x_i} \right)$ whenever $2 \nmid x_d$ and $y_d > 1$, and 
        \item[(3)] $I_d(w) = w$ whenever $2 \nmid x_d$ and $y_d = 0$ or $\operatorname{asc}(w) < d$.
    \end{enumerate}
    Note that $I_d$ is also a weight-preserving, sign-reversing involution for all $1 \le d \le j$.
    Furthermore, for $w \in \operatorname{fix}(I_j)$, we must have that $2 \nmid x_d$ and $y_d = 1$ for all $1 \le d \le j$, that is, \[
        w = 1^{2z_0} \left( \prod_{i=1}^{\operatorname{asc}(w)} 0^{1}1^{2z_i + 1} \right)
    \]
    with $z_i \ge 0$ and \[
        2z_0 + \sum_{i=1}^{\operatorname{asc}(w)} (2z_i+1) + \operatorname{asc}(w) = 2 \left( \sum_{i=0}^{\operatorname{asc}(w)} z_i + \operatorname{asc}(w) \right) = n.
    \]
    Note if $2 \nmid n$ then the above equation has no integer solutions and therefore there cannot exist a $w \in \operatorname{fix}(I_j)$ so our sum is 0 as desired.

    Now assume that $2 \mid n$.
    Note that picking the $z_i$'s in the above equation uniquely determines $w$, so we simply need to count solutions.
    Since solutions to the above are in 1-1 correspondence with solutions to \[
        \sum_{i=0}^{\operatorname{asc}(w)} z_i' = n/2 + 1
    \]
    with $z_i' > 0$, there are $\binom{n/2}{\operatorname{asc}(w)}$ solutions.
    Also, note that since $2\mid n$
    $\operatorname{wt}(w) = (-1)^{n-\operatorname{asc}(w)} = (-1)^{\operatorname{asc}(w)}$.
    Thus, we have \begin{align*}
        \sum_{k=0}^n (-1)^k R_{n,k}^{(j)} &= \sum_{\ell=0}^j (-1)^\ell \binom{n/2}{\ell} \\
        &= (-1)^j \binom{n/2-1}{j}
    \end{align*}
    as desired.

(Note that the $j=1$ case yields  Theorem \ref{altrowsum1}.)
\end{proof}

We conculde with a bijection between the binary strings enumerated by $R_{n,k}^{(j)}$ and the restricted subsets enumerated by $\binom{n}{k}_j$.

\begin{theorem} \label{binsubbijection}
    For $0 \leq n,k,j \in \Z$, we have \[
        R^{(j)}_{n,k} = \left| \binom{n}{k}_{j} \right|.
    \]
    \label{gregbij}
\end{theorem}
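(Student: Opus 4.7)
The plan is to construct an explicit bijection $\phi\colon \binom{n}{k}_j \to B_k^{(j)}(n)$. The key structural fact, which follows from the proof of the closed form $R_{n,k}^{(j)} = \sum_{r=0}^j \binom{k}{r}\binom{n-k}{r}$ shown above, is that every word in $B_k^{(j)}(n)$ with exactly $r$ ascents has the unique block decomposition
\[ w = 1^{x_0} 0^{y_1} 1^{x_1} 0^{y_2} \cdots 0^{y_r} 1^{x_r} 0^{y_0}, \]
where $x_0, y_0 \ge 0$, $x_i, y_i \ge 1$ for $1 \le i \le r$, $\sum x_i = k$, and $\sum y_i = n-k$. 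Meanwhile, any subset $S \in \binom{n}{k}_j$ with $|S \cap \{1, \ldots, n-k\}| = r$ is uniquely determined by two strictly increasing sequences: its ``bottom'' elements $a_1 < \cdots < a_r$ in $\{1, \ldots, n-k\}$, together with the ``missing top'' elements $c_1 < \cdots < c_r$ in $\{n-k+1, \ldots, n\} \setminus S$; note that $|S| = k$ forces exactly $r$ omissions from the top $k$ positions.

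The bijection $\phi$ will encode these two sequences as the block exponents of $\phi(S)$. With the convention $a_0 = 0$, I would define
\[ y_i = a_i - a_{i-1} \text{ for } 1 \le i \le r, \qquad y_0 = (n-k) - a_r, \]
\[ x_0 = c_1 - (n-k) - 1, \qquad x_i = c_{i+1} - c_i \text{ for } 1 \le i \le r-1, \qquad x_r = n + 1 - c_r. \]
Two telescoping identities yield $\sum y_i = n-k$ and $\sum x_i = k$ automatically, and the positivity conditions $y_i, x_i \ge 1$ for $i \ge 1$ follow from $a_i > a_{i-1}$ and $c_{i+1} > c_i$, while $y_0, x_0 \ge 0$ come from $a_r \le n-k$ and $c_1 \ge n-k+1$. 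The $r = 0$ case collapses to $\phi(\{n-k+1, \ldots, n\}) = 1^k 0^{n-k}$, as expected.

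Invertibility is immediate: given any $w \in B_k^{(j)}(n)$, one reads off $r = \operatorname{asc}(w)$ and the block lengths, then recovers $a_i = y_1 + \cdots + y_i$ and $c_i = (n-k) + 1 + x_0 + x_1 + \cdots + x_{i-1}$, which reassembles $S$. Since the condition $r \le j$ is preserved in both directions, $\phi$ is the desired bijection, and hence $R_{n,k}^{(j)} = |B_k^{(j)}(n)| = \left|\binom{n}{k}_j\right|$. I do not anticipate any serious obstacle beyond careful bookkeeping at the endpoint indices $i \in \{0, r\}$, where the formulas differ slightly from the generic case; as a sanity check, each side partitions into ``$r$-slices'' of size $\binom{n-k}{r}\binom{k}{r}$ (choosing the $a$'s and the $c$'s on one side, and matching the ascent-count stratification on the other), and $\phi$ is just the natural bijection between these slices.
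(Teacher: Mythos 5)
Your proposal is correct and follows essentially the same route as the paper: split $S \in \binom{n}{k}_{j}$ into its part below $n-k$ and its part above, encode the bottom elements and the missing top elements as gap/run-lengths, and assemble the binary word $1^{x_0}0^{y_1}1^{x_1}\cdots 0^{y_r}1^{x_r}0^{y_0}$, with the ascent count equal to $|S \cap \{1,\ldots,n-k\}|$ so the bound $j$ transfers. The only difference is a cosmetic reindexing of which gap feeds which $1$-run (you anchor at the first missing top element, the paper at the last), which does not change the argument.
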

\begin{proof}
    Let $S \in \binom{n}{k}_{j}$ then set $S_x = S \cap \{n-k+1, \ldots, n\}$ and $S_y = S \cap \{1,2,\ldots,n-k\}$.
    Note that $S = S_x \cup S_y$ and $S_x \cap S_y = \emptyset$.
    Then, we can write $S_y$ as \[
        S_y = \left\{ y_1, \; y_1+y_2, \; y_1+y_2+y_3, \; \ldots, \; \sum_{i=1}^{m} y_i \right\}
    \]
    where $y_i > 0$ and $0 \le m \le j$.
    Define $h\text{: } \Z \to \Z$ by $a \mapsto a - (n-k)$ which is invertible and therefore a bijection.
    Then note that $h(S_x) \subset \{1,2,\ldots, k\}$ as for any $x \in S_x$, $n-k < x \le n$.
    Accordingly, we  write $h(S_x)$ as \[
        h(S_x) = [k] \setminus \left\{ x_1, \; x_1+x_2, \; \ldots, \; \sum_{i=1}^{m} x_i \right\}
    \]
    with $x_i > 0$.
    Now, define $f\text{: } \binom{n}{k}_j \to B^{(j)}_{k}(n)$ by \[
        f(S) = f(S_x \cup S_y) = 1^{k-X} \left( \prod_{i=1}^{m} 0^{y_i}1^{x_i} \right) 0^{n-k-Y}
    \]
    where $X = \max S_x$ and $Y = \max S_y$.
    Given a $w \in B^{(j)}_{k}(n)$, we can easily determine it's corresponding $x_i$ and $y_i$ values which gives us $S_y$ and $h(S_x)$.
    Since $h$ is invertible, we know $S_x$ as well, which gives us the $S \in \binom{n}{k}_j$ that is mapped to $w$ by $f$.
    Therefore, $f$ is invertible, completing the proof.
\end{proof}

\end{document}